
\documentclass[reqno,10pt]{amsart}
\usepackage{amssymb,amsthm,colordvi}

\usepackage{color,epsf,graphicx,graphics}

\usepackage{latexsym,amsfonts,amssymb,amsmath,amsthm,calc,color,colordvi,epsf,graphicx,graphics,float,upgreek}

\usepackage[ansinew]{inputenc}

\numberwithin{equation}{section}

%\usepackage[notref,notcite]{showkeys}

%\paperwidth=16cm \paperheight=27cm
%\textheight=21cm

%\topmargin=-2cm

%\textwidth=25cm
%\textheight=17cm
%\oddsidemargin=0cm
%\evensidemargin=0cm

%\special{landscape}

%\setlength{\parindent}{0em} \sloppy

\newtheorem{defi}{Definition}[section]
\newtheorem{thm}[defi]{Theorem}

\newcommand{\bpr}{\begin{proof}[Proof]}  %Proof
\newcommand{\epr}{\end{proof}}
\newcommand{\beq}{\begin{equation}}
\newcommand{\eeq}{\end{equation}}
\newcommand{\bce}{\begin{center}}
\newcommand{\ece}{\end{center}}
\newcommand{\be}{\begin{enumerate}}  %enumerate
\newcommand{\ee}{\end{enumerate}}
\newcommand{\bc}{\begin{center}}
\newcommand{\ec}{\end{center}}

\renewcommand{\Red}{\Black}

\def\R{\mathbb R}
\def\C{\mathbb C}

\def\N{\mathbb N}

\def\EE{\mathbb E}
\def\FF{\mathbb F}

\def\cA{\mathcal A}
\def\cB{\mathcal B}

\def\cD{\mathcal D}

\def\cE{\mathcal E}

\def\cMH{\mathcal{MH}}

\def\cSM{\mathcal{SM}}

\def\bea{\begin{eqnarray}}
\def\eea{\end{eqnarray}}
\def\beas{\begin{eqnarray*}}
\def\eeas{\end{eqnarray*}}
\newcommand{\ve}{\varepsilon}

%\numberwithin{equation}{section}

%\pagenumbering{empty}

%\renewcommand{\theequation}{\arabic{section}.\arabic{equation}}
%\renewcommand{\thedefi}{\arabic{section}.\arabic{defi}}
%\renewcommand{\thefootnote}{\fnsymbol{footnote}}
%\renewcommand{\labelenumi}{(\roman{enumi})}
%\renewcommand{\labelenumii}{(\alph{enumii})}
%\renewcommand{\thefootnote}{\arabic{footnote}}

\begin{document}

\title[The Verigin Problem]
{ The Verigin Problem\\ with and without Phase Transition}

\author{Jan Pr\"uss}
\address{Martin-Luther-Universit\"at Halle-Witten\-berg\\
         Institut f\"ur Mathematik \\
         Theodor-Lieser-Strasse 5\\
         D-06120 Halle, Germany}
\email{jan.pruess@mathematik.uni-halle.de}

\author{Gieri Simonett}
\address{Department of Mathematics\\
        Vanderbilt University\\
        Nashville, Tennessee\\
        USA}
\email{gieri.simonett@vanderbilt.edu}

\thanks{The research of G.S.\ was partially
supported by the NSF Grant DMS-1265579.}

\subjclass[2010]{35Q35, 76D27, 76E17, 35R37, 35K59}
\keywords{Two-phase flows, phase transition, Darcy's law, Forchheimer's law, available energy, quasilinear parabolic evolution equations, maximal regularity, generalized principle of linearized stability, convergence to equilibria}

\begin{abstract}
Isothermal compressible two-phase flows with and without phase transition are modeled, employing Darcy's and/or Forchheimer's law for the velocity field. It is shown that the resulting systems are thermodynamically consistent in the sense that the available energy is a strict Lyapunov functional. In both cases, the equilibria are identified and their thermodynamical stability is investigated by means of a variational approach. It is shown that the problems are well-posed in an $L_p$-setting and generate local semiflows in the proper state manifolds. It is further shown that a non-degenerate equilibrium is dynamically stable in the natural state manifold if and only if it is thermodynamically stable. Finally, it is shown that a solution which does not develop singularities exists globally and  converges to an equilibrium in the state manifold.
\end{abstract}

\maketitle

\section{Introduction}

\noindent
The Verigin problem concerns compressible two-phase potential flows driven by surface tension. 
It is the compressible analogue to the Muskat problem in which the phases are incompressible. 
In contrast to the Muskat problem, there is only scarce work on the Verigin problem. We only know of the
papers \cite{BiSo00, Fro99, Fro03, Rad04, Tao97, TaYi96, Xu97}, which address local existence in some special cases, mostly excluding surface tension, which is physically questionable.
None of these papers deals with thermodynamical consistency, equilibria, stability questions, and large time behaviour of solutions. Also, there are no results at all on the Verigin problem with phase transition.

It is the aim of this paper to close these gaps. 
We shall develop a fairly complete dynamical theory for the Verigin problem with and without phase transition.
This includes local well-posedness, thermodynamical consistency,  identification of the equilibria, discussion of their stability, the local semiflows on the proper state manifolds, as well as convergence to equilibrium  of solutions which do not develop singularities in a sense to be specified. To a large extent we will follow the strategy and employ the tools of the monograph Pr\"uss and Simonett \cite{PrSi16}.

In Section 2 we derive the model for the Verigin problem with and without phase transition,
following  the arguments of~\cite[Chapter 1]{PrSi16}.
In Sections 3 and~5 we discuss the thermodynamical properties of the model and analyze the stability of  equilbria,
obtaining novel results   not contained in~\cite{PrSi16}.
In Section 4 we  derive the linearization of the Verigin problem and analyze the main symbol of the linearized problem.
Here we can take advantage of the results in Sections~6.6 and~6.7 of~\cite{PrSi16}
which deal with solvability of the linearized Stefan and Verigin problem, respectively.
Well-posedness of the (nonlinear) Verigin problem  in Section~4 is new.
Lastly, in Section~6 we discuss the global behavior of solutions, following the strategy laid out
in~\cite[Section 11.4]{PrSi16}. 

To fix some notation, in this paper $\Omega\subset\R^n$ denotes a bounded domain with outer boundary {$\partial\Omega\in C^2$}, 
{$\varrho$} the density, {$u$} the velocity, and {$\pi$} the pressure field. The domain {$\Omega$} consists of two parts, $\Omega_1$ 
is the so-called {\em disperse phase}
and {$\Omega_2$} is the {\em continuous phase}, {$\Gamma =\partial\Omega_1\subset \Omega$} denotes the {\em interface}. In particular, we assume no {\em boundary contact}. The {outer normal} of {$\Omega_1$} will be denoted by {$\nu_\Gamma$}, the corresponding 
{\em normal velocity} of {$\Gamma$} by {$V_\Gamma$}, and the {\em normal jump} of a quantity {$v$} across {$\Gamma$} by {$[\![v]\!]:= v_2-v_1$}. A typical initial geometry is depicted in Figure 1.

The free energies in the phases will be given functions {$\psi(\varrho)$} which may depend on the phase.
The relation between pressure and density in each phase is given by {Maxwell's law}, which reads
\begin{equation} \pi(\varrho)=\varrho^2\psi^\prime(\varrho),\quad \varrho>0.\end{equation}
We assume that this function is strictly increasing, hence we may invert it to obtain the so-called {\em equation of state} $\varrho=\varrho(\pi)$.

The converse statement is also true. Given an equation of state $\varrho =\varrho(\pi)$ with $\varrho$ strictly increasing, inverting this relation we find $\pi=\phi(\varrho)$ and $\psi(\varrho)$ can then be found from $\psi^\prime(\varrho)= \pi(\varrho)/\varrho^2$, up to a constant. However, in this paper we consider the free energies to be given.

As an example, we consider an {\em ideal gas}, where the equation of state reads $\pi(\varrho)=c\varrho$, with some constant $c>0$. Then we obtain
$$ \psi(\varrho) = c\log(\varrho)  +d,$$
where $d=\psi(1)$ denotes another constant. Another common equation of state reads $\pi(\varrho) =c \varrho^r$, with constants $c>0$, $r>0$, $r\neq1$. In this case we have
$$ \psi(\varrho)= \frac{c}{r-1} \varrho^{r-1} +d.$$

%%%%%%%%%%%%%%%%%%%%%%%%%%%%%
\begin{figure}
\centering
\includegraphics[width=7cm, height=5cm]{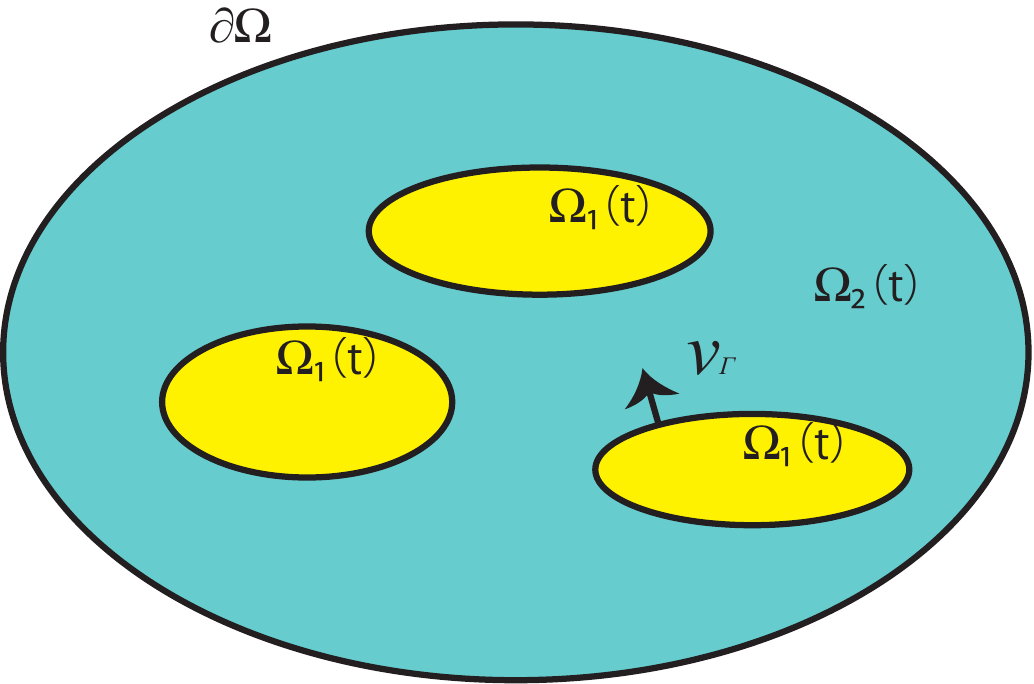} %{2phasebdd8.eps} 
\caption{A typical geometry}
\end{figure}
%%%%%%%%%%%%%%%%%%%%%%%%%%%
\section{Modeling}

In the sequel we briefly explain the model; cf.\ the monograph Pr\"uss and Simonett \cite{PrSi16}, Chapter 1, for more details.

\bigskip

\noindent
{\bf 2.1 Balance of mass}

\noindent
We assume that there is no surface mass.
 Then balance of mass becomes
\begin{equation}\label{1.1}
\begin{aligned}
\partial_t \varrho+ {\rm div}\, (\varrho u)&=0  &&\mbox{in}\;\; \Omega\setminus\Gamma(t),\\
\mbox{}[\![\varrho(u\cdot\nu_\Gamma-V_\Gamma)]\!] &=0   &&\mbox{on}\;\; \Gamma(t).
\end{aligned}
\end{equation}
At the outer boundary {$\partial\Omega$} we assume {$u\cdot\nu=0$}.

\medskip

\noindent
These equations imply in particular {\em conservation of total mass}
$$ \frac{d}{dt}{\sf M}(t)=0, \quad {\sf M}(t)=\int_\Omega \varrho(t,x)\,dx={\sf M}(0)=:{\sf M}^0.$$
We define the {\em interfacial mass flux} (phase flux for short) by means of
$$ j_\Gamma:= \varrho(u\cdot\nu_\Gamma-V_\Gamma), \quad \mbox{{which means}}\quad {[\![\frac{1}{\varrho}]\!] j_\Gamma= [\![u\cdot \nu_\Gamma]\!]}.$$
Note that $j_\Gamma$ is well-defined by \eqref{1.1}.
We consider two cases.
\begin{itemize}
\item[{\bf (i)}] {\em No phase transition} means {$j_\Gamma\equiv0$}. Then
\begin{equation}\label{1.2}
V_\Gamma:=u\cdot\nu_\Gamma \quad \mbox{ and }\quad  [\![u\cdot\nu_\Gamma]\!]=0,
\end{equation}
i.e.,\ the interface is advected with the flow.
\vspace{2mm}
\item[{\bf (ii)}] {\em Phase transition} means
{$j_\Gamma\not\equiv0$}.  Then
$$V_\Gamma = u\cdot\nu_\Gamma-j_\Gamma/\varrho,$$
which implies
\begin{equation}\label{1.3}
 [\![\varrho]\!] V_\Gamma = [\![\varrho u\cdot\nu_\Gamma]\!]\quad  \mbox{and} \quad [\![1/\varrho]\!]j_\Gamma= [\![u\cdot\nu_\Gamma]\!].
 \end{equation}
Due to the additional variable {$j_\Gamma$}, in this case one more equation on the interface will be needed.
\end{itemize}
There are two more cases.
\begin{itemize}
\item[{\bf (a)}] {\em  The incompresible case}: {\bf Muskat Problems}.\\
Here the density {$\varrho>0$} is assumed to be constant in each phase.
\vspace{2mm}
\item[{\bf (b)}] {\em The compressible case}: {\bf Verigin Problems}.\\
Here the density {$\varrho=\varrho(\pi)>0$} depends on the pressure and satisfies {$\varrho^\prime(\pi)>0$} for all relevant {$\pi\in\R$}.
\end{itemize}

\medskip

\noindent
In this paper we concentrate on the Verigin problems. The Muskat problems reduce to nonlocal geometric evolution  equations which are studied in the monograph  Pr\"uss and Simonett \cite{PrSi16}, Chapter 12; see also Pr\"uss and Simonett \cite{PrSi16a} and the references given there.

\bigskip
\goodbreak
\noindent
{\bf 2.2 Modeling the velocity}

\noindent
The velocity {$u$} is modeled as a potential flow, following {\em Darcy's law}. This means
\begin{equation}\label{2.1} u = -k\nabla\pi,\end{equation}
where $k=k(\pi)>0$ is called {\em permeability}. Note that the function {$k$} depends on the phase. A variant of this is 
{\em Forchheimer's law} which reads
\begin{equation}\label{2.2} g(|u|)u= -l \nabla\pi,\end{equation}
where $l=l(\pi)>0$,
and {$g>0$} is such that the function {$s\mapsto sg(s)$} is strictly increasing. Solving this equation for {$u$} leads to
\begin{equation}\label{2.3} u = -k(\pi,|\nabla\pi|^2)\nabla \pi,\end{equation}
with {$ k(\pi,s)>0$} and {$k(\pi,s)+2s\partial_2 k(\pi,s)>0,$} for all $\pi\in\R$ and $s\ge 0$. %Druck

Conservation of mass then yields the quasilinear diffusion equation
\begin{equation}\label{2.4} 
\varrho^\prime(\pi) \partial_t \pi - {\rm div}(\varrho(\pi)k(\pi,|\nabla\pi|^2)\nabla\pi)=0\quad \mbox{in} \;\; \Omega\setminus\Gamma(t),\end{equation}
and the boundary condition $u\cdot\nu=0$ becomes the Neumann condition
{$ \partial_\nu\pi =0$} on the outer boundary {$\partial\Omega$}.

On the interface, the driving force will be {\em surface tension}, which means
\begin{equation}\label{2.5} [\![\pi]\!] =\sigma H_\Gamma,\end{equation}
where {$H_\Gamma$} denotes the {\em mean curvature} of the interface, and {$\sigma>0$} the (constant) coefficient of surface tension.

Next we have to distinguish the cases {\bf (i)} and {\bf (ii)}.

\medskip

\noindent
{\bf (i)} Here there is no phase transition which means {$j_\Gamma=0$}, hence we obtain by  \eqref{1.2}
$$ 0=[\![u\cdot\nu_\Gamma]\!] = -[\![k(\pi,|\nabla\pi|^2)\partial_\nu \pi]\!]\quad \mbox{on} \;\; \Gamma,$$
and
$$ V_\Gamma = u\cdot\nu_\Gamma = -k(\pi,|\nabla\pi|^2)\partial_\nu\pi\quad \mbox{on} \;\; \Gamma.$$
In this case the mass is  preserved, even in each component of the phases!

\medskip
\noindent
{\bf (ii)} If phase transition is present, then  we obtain by \eqref{1.3}
$$ [\![\varrho(\pi)]\!]V_\Gamma = [\![\varrho(\pi)u\cdot\nu_\Gamma]\!] = -[\![\varrho(\pi)k(\pi,|\nabla\pi|^2)\partial_\nu\pi]\!]\quad \mbox{ on } \; \Gamma.$$
Due to additional variable $j_\Gamma$,  we have to add  another condition on the boundary, which will be the (reduced) {Gibbs-Thomson law}
$$ [\![\psi(\varrho) +\varrho\psi^\prime(\varrho)]\!]=0 \quad \mbox{ on } \; \Gamma.$$
In this case only the total mass is conserved.
Note that here the {free energy} {$\psi(\varrho)$} shows up explicitly, in contrast to the case without phase transition..

Summarizing, we have the following two problems

\bigskip
\goodbreak
\noindent
{\bf 2.3 The Verigin problem without phase transition}

\noindent
The resulting problem becomes
\begin{equation}
\label{Ve-without}
\begin{aligned}
\varrho^\prime(\pi) \partial_t \pi - {\rm div}(\varrho(\pi)k(\pi,|\nabla\pi|^2)\nabla\pi)&=0 &&\mbox{in} \;\; \Omega\setminus\Gamma(t),\\
\partial_\nu\pi &=0  &&\mbox{on} \;\; \partial\Omega,\\
\mbox{} [\![\pi]\!] &=\sigma H_\Gamma && \mbox{on} \:\; \Gamma(t),\\
\mbox{} [\![k(\pi,|\nabla\pi|^2)\partial_\nu \pi]\!]&=0  &&\mbox{on} \; \Gamma(t),\\
V_\Gamma +k(\pi,|\nabla\pi|^2)\partial_\nu\pi& =0  &&\mbox{on} \; \Gamma(t),\\
  \Gamma(0)=\Gamma_0,\quad\pi(0)&=\pi_0 && \mbox{in} \;\; \Omega.
\end{aligned}
\end{equation}
In the sequel, we assume
$$ \varrho\in C^2(\R_+),\quad \varrho(p), \varrho^\prime(p)>0 \quad \mbox{for all}\;\; p\in\R,$$
and
$$ k\in C^2(\R\times\R_+),\quad k(p,s),\; k(p,s)+2s\partial_2k(p,s)>0 \quad \mbox{for all} \;\; p\in\R,\; s\ge 0.$$

\bigskip
\noindent
{\bf 2.4 The Verigin problem with phase transition}

\noindent
This problem reads as follows.
\begin{equation}
\label{Ve-with}
\begin{aligned}
\varrho^\prime(\pi) \partial_t \pi - {\rm div}(\varrho(\pi)k(\pi,|\nabla\pi|^2)\nabla\pi)&=0 &&\mbox{in} \;\; \Omega\setminus\Gamma(t),\\
\partial_\nu\pi &=0 &&\mbox{on} \;\; \partial\Omega,\\
 \mbox{}[\![\pi]\!] &=\sigma H_\Gamma &&\mbox{on} \;\; \Gamma(t),\\
\mbox{}[\![\psi(\varrho) +\varrho\psi^\prime(\varrho)]\!]&=0  &&\mbox{on} \;\; \Gamma(t),\\
\mbox{}[\![\varrho(\pi)]\!] V_\Gamma +[\![\varrho(\pi)k(\pi,|\nabla\pi|^2)\partial_\nu\pi]\!]&=0 &&\mbox{on} \;\; \Gamma(t),\\
  \Gamma(0)=\Gamma_0,\quad\pi(0)&=\pi_0 && \mbox{in} \;\; \Omega.
\end{aligned}
\end{equation}
It should be observed that, besides  the previous assumptions on {$k$} and {$\varrho$}, this problem will only be well-posed if {$[\![\varrho]\!]\neq0$}, in contrast to the case without phase transition.

\section{Thermodynamical Properties of the Models}

In this section some physical properties of the models are discussed. We first introduce the available energy ${\sf E}_a$.

\bigskip

\noindent
{\bf 3.1 The available energy and equilibria}

\noindent
The available energy ${\sf E}_a $ is given by
$$ {\sf E}_a ={\sf E}_a(\pi,\Gamma) = \int_\Omega \varrho\psi \,dx + \sigma {\rm mes}(\Gamma);$$
it is the sum of free and surface energy. A short computation yields
$$ \frac{d}{dt} {\sf E}_a(t) = -\int_\Omega k(\pi,|\nabla\pi|^2)|\nabla\pi|^2\,dx\leq 0,$$
hence {${\sf E}_a$} is a {Lyapunov functional} for both Verigin problems. Note that in case {\bf (ii)} there is no energy dissipation on the interface, due to the Gibbs-Thomson relation.

To see that ${\sf E}_a$  is even a {strict Lyapunov functional}, suppose  {$\frac{d}{dt} {\sf E}_a(t)=0$} at some time {$t$}. As {$k>0$} this implies
{$\nabla\pi=0$},
hence {$\pi$} is constant in the components of the phases, and moreover with {$\varrho^\prime>0$} this yields {$\partial_t \pi=0$} as well as
{$V_\Gamma =0$} in case {\bf (i)}, and also in case {\bf (ii)} if {$[\![\varrho]\!]\neq0$}. Thus we are at an equilibrium, which proves that the available energy is even a {strict Lyapunov functional}.

Via the interface condition {$[\![\pi]\!] =\sigma H_\Gamma$} this further shows  that  {$H_\Gamma$} is constant on the components of the interface.
Therefore, the (non-degenerate) equilibria are constant pressures in the components of the phases, and {$\Gamma$} is a disjoint union of finitely many disjoint spheres
{$\Gamma_j=S_{R_j}(x_j)$}, say {$j=1,\dots,m$},
such that
$$ [\![\pi]\!] = -\frac{(n-1)\sigma}{R_j}\quad \mbox{ on } \; \Gamma_j,\quad j=1,\ldots,m.$$
The set of {non-degenerate equilibria} is denoted by {$\cE$} in the sequel. \\
Now we have to distinguish the cases.

\medskip

\noindent
{\bf (i)} {\em Without phase transition.}\\
In this case there are no further restrictions, hence the manifold of equilibria has dimension {${\rm dim}\,\cE= m(n+1)+1$}. Prescribing the masses of the components of the phases, this yields $(m+1)$ conditions, reducing the degrees of freedom to {$mn$}. We emphasize that in this case
the radii {$R_j>0$} of the spheres  are arbitrary and the continuous phase {$\Omega_2$} need not be connected.

\medskip

\noindent
{\bf (ii)} {\em With phase transition.}\\
Here we have the additional interface condition {$[\![\psi(\varrho)+\varrho\psi^\prime(\varrho)]\!]=0$}. As the functions
\begin{equation}
\label{varphi}
 \varphi(\varrho):=\psi(\varrho)+\varrho\psi^\prime(\varrho)\quad \mbox{satisfy}\quad \varphi^\prime(\varrho)= \pi^\prime(\varrho)/\varrho>0,
\end{equation}
this shows that {$\varrho_2$} uniquely determines {$\varrho_1$} and vice versa, hence the same is valid for {$\pi_i$}. Therefore, the densities and pressures are constant even throughout the phases, and so the spheres all have the same radius. Consequently, {$\Omega_2$} is connected, and  the dimension of {$\cE$} in this case is
{${\rm dim}\,\cE= mn+1$}; conservation of mass reduces it by one.

\bigskip

\noindent
{\bf 3.2 The variational approach: first variation}

\noindent
Consider the functional {${\sf E}_a(\pi,\Gamma)$}, i.e.,\ the available energy, with constraints

\medskip

\noindent
{\bf (i) } {\em Without phase transition}.\\
$$ {\sf M}_{ij}(\varrho,\Gamma) =\int_{\Omega_{ij}} \varrho \,dx = {\sf M}_{ij}(0)=: {\sf M}_{ij}^0.$$
This encodes conservation of mass of the components {$\Omega_{ij}$} of the phases {$\Omega_i$}, for {$i=1,2$}.

\medskip

\noindent
{\bf (ii)} {\em With phase transition}.
$$ {\sf M}(\varrho,\Gamma) = \int_\Omega \varrho \,dx = {\sf M}(0)=:{\sf M^0},$$
which means  conservation of total mass.

\medskip

\noindent
The method of Lagrange multipliers at a critical point {$e_*:=(\pi_*,\Gamma_*)$} with these constraints yields
$${\sf E}_a^\prime(e_*) + \sum_{ij}\mu_{ij} {\sf M}_{ij}^\prime(e_*) =0,\quad \mbox{resp.} \quad {\sf E}_a^\prime(e_*)
+  \mu{\sf M}^\prime(e_*) =0,$$ for some constants {$\mu_{ij},\mu\in\R$}.
A short computation implies with
%$\varphi(\varrho) = \psi(\varrho)+\varrho \psi^\prime(\varrho)$} and 
$\varrho_*=\varrho(\pi_*)$ that {$\varphi(\varrho_*)$ is constant in each component of the phases, hence {$\varrho_*$} is as well, as {$\varphi$} is strictly increasing, and then also {$\pi_*$}  has this property, as {$\varrho$} is strictly increasing, by assumption. Furthermore, we obtain in both cases {$[\![\pi]\!]=\sigma H_{\Gamma_*}$}. In addition, in case {\bf (ii)} we also get {$[\![\varphi(\varrho_*)]\!]=0$}.

\medskip

\noindent
Consequently, in both cases the {critical points} of the available energy functional with the proper constraints are the 
{equilibria} of the system.

\bigskip

\noindent
{\bf 3.3 The variational approach: second variation}

\noindent
Next we look at the second variation of the functional
$${\sf C}:= 
{\sf E}_a + \sum_{ij} \mu_{ij}{\sf M}_{ij}, 
\quad \mbox{resp.}\quad {\sf C}:= {\sf E}_a +  \mu{\sf M}.$$
 Another computation yields with {$\varrho_*^\prime= \varrho^\prime(\pi_*)$} the following identity.
\begin{equation}
{\bf (S)}\quad  \langle {\sf C}^{\prime\prime}(e_*)(v,h)|(v,h)\rangle =\int_\Omega \frac{\varrho^\prime_*}{\varrho_*} |v|^2 \,dx +\sigma \int_{\Sigma}\cA_\Sigma h \bar h \,d\Sigma.
\end{equation}
Here
{$\cA_\Sigma = -H_{\Gamma}^\prime$} means the curvature operator on the equilibrium 
hypersurface {$\Sigma=\Gamma_*$}. For a critical point {$e_*$} of
{${\sf E}_a$} with the given constraints to be a minimum, it is necessary that this form is nonnegative on the kernel
of the derivative of the constraints at {$e_*$}.

 In case {\bf (ii)} we have
$$ (v,h)\in{\sf N}({\sf M}^\prime(e_*)) \quad \Leftrightarrow \quad \int_\Omega \varrho^\prime_*v\,dx = [\![\varrho_*]\!]\int_{\Sigma} h \,d\Sigma.$$
This further implies that the equilibrium interface {$\Gamma_*$} is connected, and that the {stability condition}
$$ {\bf (SCii)}\quad\zeta_*:=\frac{(n-1)\sigma}{[\![\varrho(\pi_*)]\!]^2 R_*^2 |\Gamma_*|} \int_\Omega \varrho^\prime(\pi_*)\varrho(\pi_*)\,dx \leq 1$$
holds true.
Note that this number is dimensionless. In fact, if $\Gamma_*=:\Sigma$ is not connected and has, say, $m>1$ components $\Sigma_k$, set $v=0$ and $h=h_k$ constant on $\Sigma_k$ with $\sum_k h_k=0$. Then $(v,h)\in {\sf N}({\sf M}^\prime(e_*))$ and
$$\langle {\sf C}^{\prime\prime}(e_*)(v,h)|(v,h)\rangle = -\frac{\sigma(n-1)|\Sigma|}{mR_*^2} \sum_k h_k^2 <0, \quad\mbox{for } h\neq0,$$
hence ${\sf C}^{\prime\prime}(e_*)$ is not positive semi-definite on ${\sf N}({\sf M}^\prime(e_*))$. On the other hand, if $\Gamma_*$ is connected,
set $v=\varrho_*w$ with $w$ constant on $\Omega$, and $h$ constant on $\Gamma_*$. In this case $(v,h)\in {\sf N}({\sf M}^\prime(e_*))$ if
$$ \big(\int_\Omega \varrho^\prime_*\varrho_*\,dx\big) w = [\![\varrho_*]\!] |\Gamma_*| h,$$
and
$$\langle {\sf C}^{\prime\prime}(e_*)(v,h)|(v,h)\rangle= \big(\int_\Omega \varrho^\prime_*\varrho_*\,dx\big) w^2 -\frac{\sigma(n-1)|\Gamma_*|}{R^2_*} h^2,$$ is nonnegative if and only if the stability condition {\bf (SCii)} is valid.

Summarizing, we have

\begin{thm} The Verigin problem with phase transition has the following properties.
\begin{enumerate}
\item The {total mass} is preserved along smooth solutions.
\item The {available energy} is a strict Lyapunov functional.
\item The non-degenerate {equilibria} consist of  constant pressures in the phases and the interface {$\Gamma_*$} is a finite disjoint union of spheres of {\bf common radius} {$R_*>0$}, and {$\Omega_2$} is connected.
\item The {equilibria} are precisely the critical points
of the available energy functional with prescribed total mass.
\item {Onset of Ostwald ripening}:
if the available energy functional with prescribed mass  has a {local minimum} at {$e_*=(\pi_*,\Gamma_*)$}
then {$\Gamma_*$} is connected, and the stability condition {${\bf (SCii)}$} holds.
\item If either {$\Gamma_*$} is disconnected or {$\zeta_*>1$}, then {$e_*$} is a saddle point of {${\sf E}_a$} with constraint
{${\sf M} = {\sf M}^0$}.
\end{enumerate}
In particular, the Verigin problem with phase transition is thermodynamically consistent, and an equilibrium is thermodynamically stable if and only if\, $\Gamma_*$ is connected and  the stability condition  {\bf (SCii)} holds.
\end{thm}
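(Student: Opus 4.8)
The plan is as follows. Properties (1)--(4) are essentially contained, up to assembly, in the computations of Sections 3.1--3.2. For (1) I would apply the transport theorem to ${\sf M}(t)=\int_\Omega\rho\,dx$, insert $\partial_t\rho=-{\rm div}\,(\rho u)$ on $\Omega\setminus\Gamma(t)$, use the divergence theorem on each phase together with $u\cdot\nu=0$ on $\partial\Omega$, and observe that the interfacial terms combine into $\int_{\Gamma}[\![\rho(u\cdot\nu_\Gamma-V_\Gamma)]\!]\,d\Gamma$, which vanishes by \eqref{1.1}. For (2) I would quote $\frac{d}{dt}{\sf E}_a=-\int_\Omega k(\pi,|\nabla\pi|^2)|\nabla\pi|^2\,dx\le0$ and, for strictness, note that vanishing of the dissipation forces $\nabla\pi\equiv0$ (as $k>0$), hence $\partial_t\pi\equiv0$ (as $\rho'>0$) and, from $[\![\rho]\!]V_\Gamma+[\![\rho k\partial_\nu\pi]\!]=0$ with $[\![\rho]\!]\ne0$, also $V_\Gamma\equiv0$, so the state is an equilibrium. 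For (3), $\nabla\pi\equiv0$ makes the pressures constant on components; then $[\![\pi]\!]=\sigma H_\Gamma$ makes $H_\Gamma$ constant on each component of $\Gamma_*$, so $\Gamma_*$ is a finite union of spheres, and the Gibbs--Thomson relation $[\![\varphi(\rho)]\!]=0$ with $\varphi'=\pi'/\rho>0$ forces $\rho_1$ (hence $\pi_1$) to be a strictly monotone function of $\rho_2$, so density and pressure are constant throughout the phases, all radii coincide, and $\Omega_2$ is connected. For (4) I would run the Lagrange-multiplier computation ${\sf E}_a'(e_*)+\mu{\sf M}'(e_*)=0$: integrating by parts produces precisely $\varphi(\rho_*)$ constant on components, $[\![\varphi(\rho_*)]\!]=0$ and $[\![\pi]\!]=\sigma H_{\Gamma_*}$, which is the equilibrium system, and the converse is the same computation read backwards.

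The substance of the theorem is (5), (6) and the final equivalence, all of which I would reduce to the sign of the quadratic form $(S)$ on the tangent space ${\sf N}({\sf M}'(e_*))=\{(v,h):\int_\Omega\rho_*'v\,dx=[\![\rho_*]\!]\int_{\Sigma}h\,d\Sigma\}$. The analytic input is the spectrum of the curvature operator on a sphere: on $\Sigma=S_{R_*}$ one has $\cA_\Sigma=-\Delta_\Sigma-(n-1)/R_*^2$, so its eigenvalue on the degree-$k$ spherical harmonics is $\big(k(k+n-2)-(n-1)\big)/R_*^2$; hence $\cA_\Sigma$ has the single negative eigenvalue $-(n-1)/R_*^2$ (on the constants), a kernel spanned by the degree-one harmonics (the infinitesimal translations, available since there is no boundary contact), and is strictly positive on the orthogonal complement. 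Writing $h=\bar h_0+h'$ with $\bar h_0$ the mean of $h$ over $\Sigma$ and $h'$ mean-free gives $\sigma\int_\Sigma\cA_\Sigma h\bar h\,d\Sigma=-\frac{(n-1)\sigma}{R_*^2}|\Sigma|\,\bar h_0^2+\sigma\int_\Sigma\cA_\Sigma h'\overline{h'}\,d\Sigma$ with the last term $\ge0$.

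For the instability direction --- which gives (6) and the ``only if'' part of the last assertion, hence also (5), since a local minimum admits no negative direction of the constrained second variation --- I would exhibit a destabilizing variation together with a stabilizing one. If $\Sigma$ is disconnected with components $\Sigma_1,\dots,\Sigma_m$, take $v=0$ and $h\equiv h_k$ on $\Sigma_k$ with $\sum_k h_k=0$; then $(v,h)\in{\sf N}({\sf M}'(e_*))$ and $(S)$ equals $-\frac{(n-1)\sigma|\Sigma|}{mR_*^2}\sum_k h_k^2<0$. If $\Sigma$ is connected but $\zeta_*>1$, take $v=\rho_*w$ and $h$ constant with $(\int_\Omega\rho_*'\rho_*\,dx)\,w=[\![\rho_*]\!]|\Sigma|\,h$; then $(S)=(\int_\Omega\rho_*'\rho_*\,dx)\,w^2-\frac{(n-1)\sigma|\Sigma|}{R_*^2}h^2$, which is negative precisely when $\zeta_*>1$. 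Since $(S)$ is on the other hand strictly positive on directions with $h=0$, $\int_\Omega\rho_*'v\,dx=0$ and $v\not\equiv0$ (there it equals $\int_\Omega(\rho_*'/\rho_*)|v|^2\,dx>0$), the form is indefinite on ${\sf N}({\sf M}'(e_*))$ and $e_*$ is a saddle of ${\sf E}_a$ under the mass constraint, in particular not a local minimum.

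For the ``if'' direction --- $\Sigma$ connected and $\zeta_*\le1$ imply thermodynamic stability --- I would dominate the negative term by the bulk term using the constraint: Cauchy--Schwarz gives $\big(\int_\Omega\rho_*'v\,dx\big)^2\le\big(\int_\Omega(\rho_*'/\rho_*)|v|^2\,dx\big)\big(\int_\Omega\rho_*'\rho_*\,dx\big)$, so with $\int_\Omega\rho_*'v\,dx=[\![\rho_*]\!]|\Sigma|\bar h_0$ one obtains
\[
\langle{\sf C}''(e_*)(v,h)|(v,h)\rangle\ \ge\ |\Sigma|\,\bar h_0^2\Big(\tfrac{[\![\rho_*]\!]^2|\Sigma|}{\int_\Omega\rho_*'\rho_*\,dx}-\tfrac{(n-1)\sigma}{R_*^2}\Big)+\sigma\int_\Sigma\cA_\Sigma h'\overline{h'}\,d\Sigma ,
\]
where the bracket is $\ge0$ exactly because $\zeta_*\le1$ and the last term is $\ge0$ by the spectral remark. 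Thus ${\sf C}''(e_*)$ is positive semidefinite on ${\sf N}({\sf M}'(e_*))$, its null space consisting only of the trivial directions tangent to $\cE$ (for $\zeta_*<1$ the translation modes with $v\equiv0$; for the borderline $\zeta_*=1$ the extra direction $v=\rho_*w$, $h$ constant). Invoking the abstract criterion of \cite{PrSi16} relating the sign and null space of the constrained second variation to the geometry of $\cE$, this yields that $e_*$ is a local minimum of ${\sf E}_a$ subject to ${\sf M}={\sf M}^0$, i.e.\ thermodynamically stable, which closes the equivalence. I expect the main obstacle to be this last step --- identifying the null directions of the constrained second variation with genuine curves in $\cE$ (where the no-boundary-contact hypothesis and the normal-graph parametrization of hypersurfaces near $\Sigma$ enter) and treating the degenerate case $\zeta_*=1$; the other steps are routine integrations by parts and the elementary estimate above.
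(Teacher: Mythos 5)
Your proposal is correct and follows essentially the same route as the paper: the energy-dissipation identity for (1)--(2), the Lagrange-multiplier characterization of equilibria for (3)--(4), and the identical destabilizing test directions ($v=0$, $h$ locally constant with zero sum when $\Gamma_*$ is disconnected; $v=\rho_*w$, $h$ constant when $\zeta_*>1$) for the second variation on ${\sf N}({\sf M}^\prime(e_*))$. Your Cauchy--Schwarz step merely makes explicit that these are the worst-case directions, so that $\Gamma_*$ connected and $\zeta_*\le1$ give positive semidefiniteness of ${\sf C}^{\prime\prime}(e_*)$ on the whole constraint kernel (the paper checks only the family $v=\rho_*w$, which minimizes the bulk term under the constraint, so the arguments coincide in substance); the additional claim of a genuine local minimum via an abstract criterion is neither needed for the theorem nor established, as you yourself note.
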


\medskip

\noindent
The case {\bf (i)} without phase transition is more involved. Take any component {$\Omega_{ij}$} of {$\Omega_i$}, 
{$i=1,2$} and $j=1,\ldots,m$. Then we obtain
$$(v,h)\in {\sf N}({\sf M}_{ij}^\prime (e_*)) \quad \Leftrightarrow\quad \int_{\Omega_{ij}}\varrho^\prime_* v \,dx + (-1)^{i+1}\int _{\partial\Omega_{ij}}\varrho_*h \,d\Sigma =0.$$
Decomposing
$$v= v_0 +\sum_{ij} v_{ij} \chi_{\Omega_{ij}},\quad \int_{\Omega_{ij}} v_{0}\,dx =0,\; \mbox{ for all } i,j,$$
and
$$ h = h_0 +\sum_{k=1}^m h_k \chi_{\Sigma_{k}},\quad \int_{\Sigma_{k}} h_0 \,d\Sigma =0,\quad k=1,\ldots,m,$$
where $v_{ij}, h_k$ are constants, and observing
$$\int_\Omega (\varrho^\prime_*/\varrho_*) |v_0|^2\,dx \geq0,\quad (\cA_\Sigma h_0|h_0)_\Sigma \geq0, $$
we see that the form {$\langle{\sf C}^{\prime\prime}(v,h)|(v,h)\rangle$}
is nonnegative on {$\cap_{ij}{\sf N}({\sf M}_{ij}^\prime(e_*))$} if and only if the {stability condition}
$${\bf (SCi)}\quad C_* \quad \mbox{is positive semi-definite on}\quad \R^m $$ is valid.
Here the real symmetric matrix {$C_*$} is defined via its entries
$$ c^*_{kl} = \sum_{ij} \frac{\varrho_{ij}(\pi_*)}{\varrho^\prime_{ij}(\pi_*)|\Omega_{ij}|} \delta^k_{ij}\delta^l_{ij} - \frac{\sigma(n-1)}{R_k^2|\Sigma_k|} \delta_{kl},$$
with
{$ \delta_{ij}^k =1 \;\Leftrightarrow \; \Sigma_k\subset \partial\Omega_{ij},\; \delta^k_{ij}=0\; \mbox{otherwise}.$}
In fact, by the constraints
$$(\varrho^\prime_{ij}(\pi_*)/\varrho_{ij}(\pi_*))|\Omega_{ij}|v_{ij} = (-1)^i \sum_k |\Sigma_k| h_k \delta_{ij}^k,$$
hence, with $(v_0,h_0)=(0,0)$ we have
\begin{align*}
\langle C^{\prime\prime}(e_*)(v,h)|(v,h)\rangle &= \sum_{ij} (\varrho^\prime_{ij}(\pi_*)/\varrho_{ij}(\pi_*))|\Omega_{ij}| |v_{ij}|^2 -\sigma(n-1) \sum_k |\Sigma_k| |h_k|^2/R_k^2\\
&= \sum_{k,l} c_{kl}^* |\Sigma_k| h_k |\Sigma_l|h_l = (C_* \tilde{h}|\tilde{h})
\end{align*}
with $\tilde{h}_k = |\Sigma_k| h_k$.

Summarizing, in case {\bf (i)} we have the following result.

\begin{thm} The Verigin problem without phase transition has the following properties.
\begin{enumerate}
\item The {masses of the components of the phases} are preserved along smooth solutions.
\item The { available energy} is a strict Lyapunov functional.
\item The non-degenerate {equilibria} consist of  constant pressures in the components of the phases and the interface {$\Gamma_*$} is a finite disjoint union of spheres of {\bf arbitrary radii}.
\item The {equilibria} are precisely the critical points
of the available energy functional with prescribed total masses of the components of the phases.
\item
If the available energy functional with prescribed masses  has a {local minimum} at {$e_*=(\pi_*,\Gamma_*)$}
then {${\bf (SCi)}$} holds.
\item If {${\bf (SCi)}$} does not hold, then {$e_*$} is a saddle point of {${\sf E}_a$} with the constraints
{${\sf M}_{ij} = {\sf M}_{ij}^0$}.
%\item For \Magenta{any} given topological configuration of the phases and prescribed masses {${\sf M}_{ij} = {\sf M}_{ij}^0$} of the components of the %phases, there exists at least one stable equilibrium.
\end{enumerate}
In particular, the Verigin problem without phase transition is thermodynamically consistent, and an equilibrium is thermodynamically stable if and only if the stability condition  {\bf (SCi)} holds.
\end{thm}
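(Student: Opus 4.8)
The statement collects the facts established in \S\,3.1--3.3, so the plan is to verify the six items in turn and then read off the final equivalence. For item (1), since $j_\Gamma\equiv 0$ the interface condition \eqref{1.2} says that every component $\Omega_{ij}(t)$ is transported by the Lagrangian flow of $u$, while $u\cdot\nu=0$ on $\partial\Omega$; hence, by the transport theorem together with the bulk equation in \eqref{1.1}, $\frac{d}{dt}{\sf M}_{ij}(t)=\int_{\Omega_{ij}(t)}(\partial_t\rho+{\rm div}(\rho u))\,dx=0$. For item (2), the computation in \S\,3.1 gives $\frac{d}{dt}{\sf E}_a=-\int_\Omega k(\pi,|\nabla\pi|^2)|\nabla\pi|^2\,dx\le 0$; if this vanishes at some time then $k>0$ forces $\nabla\pi=0$, so $\pi$ is locally constant, the bulk equation and $\rho'>0$ give $\partial_t\pi=0$, and $V_\Gamma=-k\,\partial_\nu\pi=0$, so the configuration is an equilibrium and ${\sf E}_a$ is a strict Lyapunov functional.

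For item (3), at an equilibrium $\nabla\pi=0$ makes $\pi$ a constant in each component, and $[\![\pi]\!]=\sigma H_\Gamma$ then forces $H_\Gamma$ to be constant on each component of $\Gamma$; since the components are compact embedded hypersurfaces of $\R^n$ without boundary, the Alexandrov theorem identifies them as spheres, and since there is no phase transition no relation links the radii across the interface, so they are arbitrary, giving $\dim\cE=m(n+1)+1$. For item (4), I would carry out the first-variation (Lagrange multiplier) computation of \S\,3.2: the condition ${\sf E}_a'(e_*)+\sum_{ij}\mu_{ij}{\sf M}_{ij}'(e_*)=0$ yields in the bulk that $\varphi(\rho_*)=\psi(\rho_*)+\rho_*\psi'(\rho_*)$ is constant on each component, whence, as $\varphi$ and $\rho$ are strictly increasing, $\rho_*$ and thus $\pi_*$ are constant there, and on $\Gamma$ it yields $[\![\pi]\!]=\sigma H_{\Gamma_*}$; these are exactly the equilibrium equations, and conversely every equilibrium is such a critical point, so the two sets coincide.

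For items (5) and (6) I would use the second-variation identity (S): with ${\sf C}={\sf E}_a+\sum_{ij}\mu_{ij}{\sf M}_{ij}$, decompose $v=v_0+\sum_{ij}v_{ij}\chi_{\Omega_{ij}}$ and $h=h_0+\sum_k h_k\chi_{\Sigma_k}$ with $v_0,h_0$ of zero mean on each piece. The kernel condition $(v,h)\in\cap_{ij}{\sf N}({\sf M}_{ij}'(e_*))$ then separates, the zero-mean part of (S) contributes $\int_\Omega(\rho_*'/\rho_*)|v_0|^2\,dx+\sigma(\cA_\Sigma h_0|h_0)_\Sigma\ge 0$ (the second term being nonnegative because $\cA_\Sigma$ is nonnegative on functions of zero mean over each sphere, as recorded in \S\,3.3), and the constant part reduces exactly to $(C_*\tilde h|\tilde h)$ with $\tilde h_k=|\Sigma_k|h_k$. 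Hence ${\sf C}''(e_*)$ is nonnegative on the constraint kernel if and only if $C_*\ge 0$, i.e.\ if and only if $({\bf SCi})$ holds, which is item (5). If $({\bf SCi})$ fails, choose $\tilde h$ with $(C_*\tilde h|\tilde h)<0$, a descent direction, while $(v_0,0)$ with $v_0\ne 0$ lies in the constraint kernel and makes the form strictly positive, so $e_*$ is a saddle of ${\sf E}_a$ under the constraints, which is item (6). The final equivalence is then immediate: by (6) an equilibrium with $({\bf SCi})$ violated is unstable, while if $({\bf SCi})$ holds the reduced second variation is nonnegative on the constraint kernel, and for a non-degenerate equilibrium this yields a constrained local minimum, i.e.\ thermodynamic stability, as in \cite{PrSi16}.

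The only step that is not pure bookkeeping is the structural analysis underlying the equivalence with $({\bf SCi})$: checking that the constrained quadratic form in (S) splits into the manifestly nonnegative zero-mean part and the finite-dimensional form $(C_*\,\cdot\,|\,\cdot\,)$ on the constants, and controlling the sign of $\cA_\Sigma$ on zero-mean functions over each component sphere. The Alexandrov step, the transport argument for (1), and the first-variation computation for (4) are routine.
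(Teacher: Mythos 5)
Your proposal is correct and follows essentially the same route as the paper: conservation of the component masses and the dissipation identity from \S\,3.1, the Lagrange-multiplier first variation from \S\,3.2 for item (4), and the second-variation identity {\bf (S)} with the decomposition into zero-mean parts and constants reducing to the finite-dimensional form $(C_*\tilde h|\tilde h)$ for items (5)--(6). The only additions beyond the paper's argument are cosmetic: you make explicit the Alexandrov-theorem step identifying the constant-mean-curvature components as spheres and spell out the saddle-point directions, both of which the paper leaves implicit.
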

\goodbreak

\section{ Local Well-Posedness of the Verigin Problems}
To prove local well-posedness we investigate the principal part of the linearization of problem \eqref{Ve-without} and \eqref{Ve-with},
respectively.
Here we follow the same steps as in~\cite[Section 1.3.2]{PrSi16}:
we choose a smooth reference manifold $\Sigma\subset\Omega$ which is close to $\Gamma_0$ and represent
the moving surface $\Gamma(t)$ as a graph in normal direction of $\Sigma$, parameterized by a height function 
$h(t,\cdot)$, that is, we write 
$$\Gamma(t)=\{ p+ h(t,p)\nu_\Sigma(p): p\in\Sigma,\; t\geq0\},$$
at least for small $|h|_\infty$.
This yields a diffeomorphism from $\Sigma$ onto $\Gamma(t)$ which will then be extended
to all of $\bar{\Omega}$ by means of the Hanzawa-transform
$$ \Xi_h(t,x) = x +\chi(d_\Sigma(x)/a)h(t,\Pi_\Sigma(x))\nu_\Sigma(\Pi_\Sigma(x))=:x+\xi_h(t,x).$$
Here $\chi$ denotes a suitable cut-off function. More precisely, $\chi\in\cD(\R)$,
$0\leq\chi\leq 1$, $\chi(r)=1$ for $|r|<1/3$, and $\chi(r)=0$ for $|r|>2/3$.
With the help of this transformation, the equations in~\eqref{Ve-without} and~\eqref{Ve-with}  can be expressed with respect to
the variables $(v, h)$, where $v$ stands for the transformed pressure, and $h$ denotes the hight function
introduced above. 
Once the transformed system is obtained, one can derive the linearization at an initial value $(v_0,h_0)$.
In order to keep this manuscript at a reasonable length, we refrain from giving details, and instead refer
to the monograph~\cite{PrSi16} where the technical steps are explained.

\bigskip

\noindent
{\bf 4.1 The principal linearization}

\noindent
{\bf (a)}\, In the bulk {$\Omega\setminus\Sigma$}:
\begin{equation*}
\varrho^\prime_0(x)\partial_t v  - \varrho_0(x){\rm div}(a(x)\nabla v)=\varrho_0^\prime(x) f_v,\\
\end{equation*}
where $a(x) = k_0(x)I+ 2k_1(x)\nabla v_0\otimes\nabla v_0$, with the abbreviations
$\varrho_0(x)=\varrho(v_0(x))$, $\varrho_0^\prime(x)= \varrho^\prime(v_0(x))$, $k_0(x)= k(v_0(x), |\nabla v_0(x)|^2)$, $k_1(x) =\partial_2k(v_0(x),|\nabla v_0(x)|^2)$.
\medskip\\
{\bf (b)}\, On the interface {$\Sigma$}: %\vspace{-5mm}
\begin{equation*}
[\![v]\!]-\sigma \Delta_\Sigma h = g_h,
\end{equation*}
and in the case without phase transition
\begin{equation*}
\begin{aligned}
-[\![\nu(x)\cdot a(x)\nabla v]\!] = g_v,\\
\partial_th + \nu(x)\cdot a(x) \nabla  v = f_h.
\end{aligned}
\end{equation*}
If phase transition is present, we have instead
\begin{equation*}
\begin{aligned}
\mbox{}[\![ v/\varrho_0(x)]\!]= g_v,\\
\mbox{}[\![\varrho_0(x)]\!]\partial_t h +[\![\varrho_0(x) \nu(x)\cdot a(x) \nabla v]\!] = f_h.
\end{aligned}
\end{equation*}
\noindent
{\bf (c)}\, On the outer boundary {$\partial\Omega$}:
{$ \partial_\nu v=0.$}\\
{\bf (d)}\, Initial conditions:
{$h(0)=h_0,\quad v(0)=v_0.$}

\bigskip
\goodbreak
\noindent
{\bf 4.2 The principal symbols}

\noindent
In the interior, the problem is clearly parabolic, due to the assumptions
$$\varrho(p),\varrho^\prime(p),k(p,s),k(p,s)+2s\partial_2k(p,s)>0,\quad p\in\R,\; s\ge 0.$$
So we only have to look at the interface {$\Sigma$}. Freezing coefficients, flattening the interface and solving the bulk problems,
this yields the following {boundary symbols}, where {$\lambda$} denotes the covariable of time, and {$\xi$} that of the tangential space directions. We set
$$ {\sf n}_i(\lambda,\xi) = ( (\varrho^\prime_i \lambda/\varrho_i +  a_i(\xi,\xi))a_i(\nu,\nu)-a(\xi,\nu)^2)^{1/2}\quad i=1,2.$$
This is the symbol of a {\em parabolic Dirichlet-to-Neumann operator}. From  Pr\"uss and Simonett \cite{PrSi16}, Sections 6.6 and 6.7,
we obtain the boundary symbols of the linearized Verigin problems.

\medskip

\noindent
{\bf (i)}{\em  Without Phase Transition.}\\
In this case the boundary symbol becomes
$$ s(\lambda,\xi)=\lambda + \frac{ {\sf n}_1(\lambda,\xi)  {\sf n}_2(\lambda,\xi)}{{\sf n}_1(\lambda,\xi)+ {\sf n}_2(\lambda,\xi)}\sigma|\xi|^2.$$
For this case we refer to Pr\"uss and Simonett \cite{PrSi16}, Section 6.7.1.

\medskip

\noindent
{\bf (ii)} {\em With Phase Transition.}\\
By a similar computation as in Pr\"uss and Simonett \cite{PrSi16}, Section 6.6.3,  we have for the boundary symbol
$$ s(\lambda,\xi)=[\![\varrho]\!]^2\lambda + \Big( \varrho_1^2{\sf n}_1(\lambda,\xi) +  \varrho_2^2{\sf n}_2(\lambda,\xi)\Big)\sigma|\xi|^2.$$
Observe that both symbols are equivalent to the boundary symbol of {the standard Stefan problem with surface tension}, namely they are equivalent to the symbol
$$ s_0(\lambda,\xi) = \lambda + |\xi|^2(\lambda+|\xi|^2)^{1/2}.$$
Therefore, the analytical setting, maximal {$L_p$}-regularity, and also the local existence proof are the same
as for the Stefan problem with surface tension!
\\
So the spaces for $(v,h)$ are
$$ v \in H^1_{p,\mu}(J;L_p(\Omega))\cap L_{p,\mu}(J;H^2_p(\Omega\setminus \Sigma)),$$
$$h\in W^{3/2-1/2p}_{p,\mu}(J;L_p(\Sigma))\cap W^{1-1/2p}_{p,\mu}(J;H^2_p(\Sigma))\cap L_{p,\mu}(J; W^{4-1/p}_p(\Sigma)).$$
Here {$\mu\in (1/p,1]$} indicates a time weight, cf.\ Pr\"uss-Simonett \cite{PrSi16}.

\bigskip
\goodbreak
\noindent
{\bf 4.3 Local well-posedness}

\noindent
We rewrite the Hanzawa-transformed problem as
$$ Lz = N(z),$$
where {$z=(v,h)$} collects the system variables.

Define the space of solutions {$\EE(a)$} on the time interval {$J=[0,a]$} by means of
{\begin{eqnarray*}
&& v\in H^1_p(J;L_p(\Omega;\R))\cap L_p(J;H^2_p(\Omega\setminus\Sigma;\R)=:\Red{\EE_{v}(a)},\\
&& h\in W^{3/2-1/2p}_p(J;L_p(\Sigma))\cap W^{1-1/2p}_p(J;H^{}_p(\Sigma))
%&&\qquad
\cap L_p(J;W^{4-1/p}_p(\Sigma))=:\EE_h(a),\\
&&{\EE}(a):=\{(v,h)\in\EE_v(a)\times\EE_h(a): (v,h) \text{ satsify the compatibility conditions}\}.
\end{eqnarray*}}
From maximal regularity we obtain that
{$L:\EE(a)\to\FF(a)$} is an {isomorphism}, and {$N:\EE(a)\to \FF(a)$} is of class {$C^1$}, provided {$p>n+2$}.
We skip here the precise description  of the data space {$\FF(a):=L\EE(a)$}.
Note that the embeddings
{\begin{eqnarray*}
&&\EE_{v}(a)\hookrightarrow C(J;W^{2-2/p}_p(\Omega\setminus\Sigma))\hookrightarrow C(J;BUC^{1+\alpha}(\Omega\setminus\Sigma))^{n+1},\\
&&\EE_h(a)\hookrightarrow C^1(J;W^{2-3/p}_p(\Sigma)) \cap C(J;W^{4-3/p}_p(\Sigma))\\
&&\mbox{}\qquad\; \hookrightarrow C^1(J;C^{1+\alpha}(\Sigma))\cap C(J;C^{3+\alpha-1/p}(\Sigma)),
\end{eqnarray*}}
with {$\alpha=1-(n+2)/p>0$} are valid.
The nonlinearity {$N$} contains

\medskip

\noindent
\begin{itemize}
\item[*] lower order terms which can be made small by smallness of $a>0$;
\item[*] highest order terms carry $\nabla_\Sigma h$ which are small by smallness of $h_0$.
\end{itemize}
\medskip

\noindent
Therefore, we may apply the {contraction mapping principle} to obtain local well-posedness of the transformed problem
for initial data $(v_0,h_0)\in W^{2-2/p}_p(\Omega\setminus\Sigma)\times W_p^{4-3/p}(\Sigma)$, satisfying appropriate compatibility conditions.
We refer to the monograph Pr\"uss and Simonett  \cite{PrSi16}, Chapter 9, for more details.

%%%%%%%%%%%%%%%%%%%%%%%%%%%%%%%%%%%%
\section{Stability of equilibria}
%%%%%%%%%%%%%%%%%%%%%%%%%%%%%%%%%%%%%
For stability of the equilibria we have to study the spectrum of the the linearization of the problems. 
We observe that these spectra only consist of a sequence of eigenvalues of finite multiplicity converging to infinity, 
due to compact embeddings, as $\Omega$ is bounded.

\goodbreak
\bigskip

\noindent
{\bf 5.1 The eigenvalue problem at an equilibrium}

 \noindent
For the case without phase transition: in the bulk {$\Omega\setminus\Sigma$}, $\Sigma:=\Gamma_*$,:
\begin{equation}\label{linb}
\varrho^\prime_*\lambda v  -\varrho_*k_*\Delta v =0.
\end{equation}
On the interface {$\Sigma:=\Gamma_*$}:
\begin{equation}\label{linin1}
\begin{aligned}
\mbox{}[\![v]\!] +\sigma \cA_\Sigma h &=0,\\
\mbox{}[\![ k_*\partial_\nu v]\!] &=0,\\
\lambda h + k_* \partial_\nu v & =0.
\end{aligned}
\end{equation}
On the outer boundary {$\partial\Omega$}:
{$ \partial_\nu v =0.$}

\medskip

\noindent
Here 
$\varrho_*=\varrho(\pi_*)$, $\varrho^\prime_*=\varrho^\prime(\pi_*)$,  $k_*=k(\pi_*,0)$,
and $\cA_\Sigma=-(n-1)/R_*^2-\Delta_\Sigma$ is the linearization of the curvature.
If phase transition is present, the interface conditions have to be replaced by
\begin{equation}\label{linin2}
\begin{aligned}
\mbox{}[\![v]\!] +\sigma \cA_\Sigma h &=0,\\
\mbox{}[\![v/\varrho_*]\!]&=0,\\
\mbox{}[\![\varrho_*]\!]\lambda h + [\![ \varrho_*k_*\partial_\nu v]\!]&=0.
\end{aligned}
\end{equation}

%\newpage

\noindent
In both cases, taking the {$L_2$}-inner product of \eqref{linb} with {$v/\varrho_*$} leads to
$$ \lambda\big[\int_\Omega \frac{\varrho^\prime_*}{\varrho_*}|v|^2\,dx +\sigma \int_\Sigma \cA_\Sigma h \bar h \,d\Sigma\big] 
+ \int_\Omega k_* |\nabla v|^2\,dx =0,$$
for any eigenvalue {$\lambda\in \C$} and eigenvector {$(v,h)$}. Therefore, all eigenvalues are real, and there are 
{no positive eigenvalues} if and only if
$$ \int_\Omega \frac{\varrho^\prime_*}{\varrho_*}|v|^2\,dx +\sigma \int_\Sigma \cA_\Sigma h \bar h \,d\Sigma\geq0,$$
for all relevant {$(v,h)\neq0$}.
Take any component {$\Omega_{ij}$} of {$\Omega_i$}, and integrate \eqref{linb} over $\Omega_{ij}$. Then we obtain in the first case for {$\lambda\neq0$}
$$\int_{\Omega_{ij}}\varrho^\prime_* v \,dx + (-1)^{i+1}\int _{\partial\Omega_{ij}}\varrho_*h \,d\Sigma =0.$$
This resembles the constraints in case {\bf (i)} found in Section 2.3.
Integrating \eqref{linb} over $\Omega$, in the second case we find
$$ \int_\Omega \varrho^\prime_* v \,dx =[\![\varrho_*]\!]\int_\Sigma h \,d\Sigma,$$
in accordance with the variational approach in case {\bf (ii)}.

\medskip

\noindent
Hence, we may conclude that there are no nontrivial eigenvalues with negative real parts, provided the equilibrium is {thermodynamically stable},
in the sense that {$C_*$} is positive semi-definite in the first case, and {$\Gamma_*$} is connected and {$\zeta_*\leq 1$} in the second case.

It is not difficult to show that the kernel of the linearization {$L$} equals the tangent space of {$\cE$} at an equilibrium {$e_*\in \cE$}. Moreover, we can prove that $0$ is a semi-simple eigenvalue of {$L$}, if and only if {${\rm det}\, C_*\neq0$} in the first case, and {$\zeta_*\neq 1$} in the second case.

Assuming the latter, we can also show that in case {\bf (i)} the number of  negative eigenvalues of {$L$} equals the number of negative eigenvalues of  {$C_*$}, and that in case
{\bf (ii)} we have $m$ positive eigenvalues if {$\zeta_*> 1$}, otherwise {$m-1$}.

These assertions will be proved in the following subsections.
\goodbreak
\bigskip

\noindent
{\bf 5.2 The kernel of the linearization}

\noindent
For the case {\bf (ii)} with phase transition we introduce the linearization operator $L_2$ in $X_0= L_p(\Omega)\times W^{2-2/p}_p(\Sigma)$ by means of
$$ L_2(v,h) = \Big(-\frac{\varrho_*k_*}{\varrho^\prime_*} \Delta v, \frac{[\![ \varrho_*k_*\partial_\nu v]\!]}{[\![\varrho_*]\!]}\Big), \quad (v,h)\in {\sf D}(L),$$
where, with $X_1 = H^2_p(\Omega\setminus\Sigma)\times W^{2-2/p}_p(\Sigma)$, the domain of $L_2$ is given by
$${\sf D}(L_2) =\{ (v,h)\in X_1:\, \partial_\nu v=0 \mbox{ on } \partial\Omega,\; [\![v/\varrho_*]\!]=0,\; [\![v]\!] +\sigma \cA_\Sigma h =0 \mbox{ on } \Sigma\}.$$
This operator is the negative generator of a compact analytic $C_0$-semigroup in $X_0$, see Section 4 and Chapter 6 in Pr\"uss and Simonett \cite{PrSi16}. Therefore, its spectrum consists only of discrete eigenvalues of finite algebraic multiplicity, clustering at infinity.

\medskip

\noindent
{\bf (a)} To compute the kernel ${\sf N}(L_2)$, suppose $L_2(v,h)=0$. Multiplying the equation for $v$ with $\varrho^\prime_* v/\varrho_*$, employing the boundary and interface conditions, we obtain
\begin{align*}
 0&= -\int_\Omega k_*\Delta v v\,dx = \int_\Omega k_*|\nabla v|^2 \,dx + \int_\Sigma [\![k_*\partial_\nu v v ]\!] \,d\Sigma\\
 &= \int_\Omega k_*|\nabla v|^2 \,dx + \int_\Sigma [\![\varrho_*k_*\partial_\nu v  ]\!]v/\varrho_* \,d\Sigma = \int_\Omega k_*|\nabla v|^2 \,dx.
\end{align*}
This implies that $v$ is constant in the components of the phases, and by the interface condition $[\![v/\varrho_*]\!]=0$ we get $v=\alpha_0 \varrho_*$, for some constant $\alpha_0$. Employing the interface condition $[\![v]\!]+\sigma \cA_\Sigma h=0$ this yields
$$ h = \alpha_0 \gamma_* + \sum_{k=1}^m\sum_{i=1}^n \alpha_{ik} Y_{i}^k,
\quad \gamma_* =\frac{[\![\varrho_*]\!]R^2_*}{\sigma(n-1)} ,$$
for some constants $\alpha_{ik}$, where $Y_{i}^k$ denote the spherical harmonics of degree one for the components $\Sigma_k$ of $\Sigma$. Therefore, the kernel of $L_2$ has dimension $(nm+1)$, and ${\sf N}(L_2)$ equals the tangent space $T_{e_*}\cE$ at the equilibrium $e_*$.

\medskip

\noindent
{\bf (b)} Next we show that the eigenvalue $0$ is semi-simple for $L_2$. So let us assume that $L_2^2(w,k)=(0,0)$. Then
$$L_2(w,k)= \alpha_0(\varrho_*,\gamma_*) + \sum_{ik} \alpha_{ik}(0, Y_i^k),$$
for some constants $\alpha_0,\alpha_{ik}$. Integrating the equation for $w$ over $\Omega$ with weight $\varrho^\prime_*$, this yields
\begin{align*}
\alpha_0 (\varrho^\prime_*|\varrho_*)_\Omega &= -\int_\Omega \varrho_* k_* \Delta w dx 
= \int_\Sigma [\![\varrho_*k_* \partial_\nu w]\!]\,\,d\Sigma\\
&= [\![\varrho_*]\!]\int_\Sigma (\alpha_0 \gamma_* +\sum_{ik} \alpha_{ik} Y_i^k) \,d\Sigma 
= [\![\varrho_*]\!]\alpha_0\gamma_*|\Sigma|,
\end{align*}
hence $\alpha_0\neq 0$ is possible if and only if in the stability condition {\bf (SCii)} equality holds, i.e.,\ $\zeta_*=1$. Assuming on the contrary that this is not the case, we obtain $\alpha_0=0$, and then by the equation for $k$ we have
$$ \sum_{ik} \alpha_{ik} Y_i^k =0,$$
which implies $\alpha_{ik}=0$ as the functions $Y_i^k$ are linearly independent. This shows $(w,h)\in {\sf N}(L)$, i.e.,\ $0$ is a semi-simple eigenvalue of $L_2$ if and only if $\zeta_*\neq1$. Otherwise, the algebraic multiplicity raises by $1$.

\bigskip

\noindent
Next we consider the case {\bf (i)} without phase transition. Here we have
$$ L_1(v,h) = (-\frac{\varrho_*k_*}{\varrho^\prime_*} \Delta v, k_*\partial_\nu v), \quad (v,h)\in {\sf D}(L_1),$$
with domain
$${\sf D}(L_1) =\{ (v,h)\in X_1:\, \partial_\nu v=0 \mbox{ on } \partial\Omega,\; [\![k_*\partial_\nu v]\!]=0,\,[\![v]\!] +\sigma \cA_\Sigma h =0 \mbox{ on } \Sigma\}.$$
This operator is also the negative generator of a compact analytic $C_0$-semigroup in $X_0$. Therefore, its spectrum consists only of discrete eigenvalues of finite algebraic multiplicity, clustering at infinity.

\medskip

\noindent
{\bf (a)} To compute the kernel ${\sf N}(L_1)$, suppose $L_1(v,h)=0$. Multiplying the equation for $v$ with $\varrho^\prime_* v/\varrho_*$, employing the boundary and interface conditions, we obtain
\begin{align*}
 0&= -\int_\Omega k_*\Delta v \,dx = \int_\Omega k_*|\nabla v|^2 \,dx + \int_\Sigma [\![k_*\partial_\nu v v ]\!] \,d\Sigma = \int_\Omega k_*|\nabla v|^2 \,dx.
\end{align*}
This implies that $v=v_{ij}$ is constant in the components $\Omega_{ij}$ of the phases. Employing the interface condition $[\![v]\!]+\sigma \cA_\Sigma=0$ this yields
$$ \sum_{ij} (-1)^{i}\delta^k_{ij}v_{ij} +\sigma\cA_\Sigma h=0\quad\mbox{on}\;\; \Sigma_k,\quad 1\le k\le m,$$
which implies
$$ h = \sum_{k=1}^m h_k \chi_{\Sigma_k} + \sum_{ik} \alpha_{ik} Y_i^k, \quad h_k =  \frac{R_k^2}{\sigma(n-1)} \sum_{ij} (-1)^{i}\delta^k_{ij}v_{ij}.$$
Thus the dimension of the kernel ${\sf N}(L_1)$ equals $(mn+m+1)$, and the tangent space $T_{e_*}\cE$  equals ${\sf N}(L_1)$.

\medskip

\noindent
{\bf (b)} Next we show that eigenvalue the $0$ is semi-simple for $L_1$. Let us assume that $L_1^2(w,k)=(0,0)$. Then
$$L_1(w,k)= (\sum_{ij}(v_{ij}\chi_{ij},\sum_k h_k\chi_{\Sigma_k}) + \sum_{lk} \alpha_{lk} (0,Y_l^k),$$
for some constants $v_{ij}, \alpha_{lk}$, and $h_k$ as defined above,
and $\chi_{ij}:=\chi_{\Omega_{ij}}$. Integrating the equation for $w$ over $\Omega_{ij}$ this yields
\begin{equation*}
\begin{aligned}
|\Omega_{ij}| v_{ij} &= -(\varrho_{ij}(\pi_*)/\varrho^\prime_{ij}(\pi_*))\int _{\Omega_{ij}} k_* \Delta w \,dx =  
(\varrho_{ij}(\pi_*)/\varrho^\prime_{ij}(\pi_*))(-1)^i\int _{\partial\Omega_{ij}} k_* \partial_\nu w \,dx\\
& = (\varrho_{ij}(\pi_*)/\varrho^\prime_{ij}(\pi_*))(-1)^i \big[\sum_l h_l\int_{\partial\Omega_{ij}}\chi_{\Sigma_k}\,d\Sigma + \sum_{lk} \alpha_{lk} \int_{\partial\Omega_{ij}}Y_l^k \,d\Sigma\big]\\
&= (\varrho_{ij}(\pi_*)/\varrho^\prime_{ij}(\pi_*))(-1)^i\sum_l \delta^l_{ij}|\Sigma_l| h_l.
\end{aligned}
\end{equation*}
Dividing by $|\Omega_{ij}|$ and summing over $i,j$, we obtain
\begin{align*}
 \frac{\sigma(n-1)}{R^2_k} h_k &=\sum_{ij} (-1)^{i}\delta^k_{ij}v_{ij}
 = \sum_l\sum_{ij}\frac{\varrho_{ij}(\pi_*)}{\varrho^\prime_{ij}(\pi_*)|\Omega_{ij}|} \delta^k_{ij}\delta^l_{ij}|\Sigma_l|h_l.
 \end{align*}
This implies that the vector $\tilde{h}$ with components $\tilde{h}_k =|\Sigma_k|h_k$ is an eigenvector of $C_*$. 
So if ${\rm det}\, C_*\neq0$ this yields $h_k=$ for all $k$; hence $v$ is constant all over $\Omega$, and  so integrating once more the equation for $w$ with weight $\varrho^\prime_*/\varrho_*$ we obtain also $v=0$. This shows that $0$ is a semi-simple eigenvalue of $L_1$ if and only if $C_*$ is invertible; otherwise the algebraic multiplicity of $0$ raises by ${\rm dim}\,{\sf N}(C_*)$.

\bigskip

\noindent
{\bf 5.3 Normal stability and normal hyperbolicity.}

\noindent
We begin with case {\bf (ii)} where phase transition is present, following the ideas in our monograph \cite{PrSi16}, Chapter 10.

\medskip

\noindent
{\bf (a)} Consider the elliptic problem
\begin{equation}\label{evpii}
\begin{aligned}
\varrho^\prime_*\lambda v  -\varrho_*k_*\Delta v &=0 &&\mbox{in}\;\; \Omega\setminus\Sigma,\\
\partial_\nu v &=0 && \mbox{on}\;\; \partial\Omega, \\
\mbox{}[\![v/\varrho_*]\!]&=0 && \mbox{on}\;\; \Sigma,\\
- [\![ \varrho_*k_*\partial_\nu v]\!]&=g && \mbox{on}\;\; \Sigma.
\end{aligned}
\end{equation}
Given $g\in H^{1/2}_2(\Sigma)$, by elliptic theory, this problem has a unique solution  $v\in H^2_p(\Omega\setminus\Sigma)$, 
for each $\lambda>0$. We then set
$$ [\![\varrho_*]\!]T_\lambda g: = [\![v]\!] = [\![\varrho_*v/\varrho_*]\!] = [\![\varrho_*]\!] v/\varrho_*.$$
This simplifies the eigenvalue problem considerably. In fact, $\lambda>0$ is an eigenvalue of the linearization $L_2$ 
at equilibrium $e_*$ if and only if $0$ is an eigenvalue of
$$ B_\lambda = [\![\varrho]\!]^2 \lambda T_\lambda +\sigma \cA_\Sigma.$$
Next, multiplying \eqref{evpii} with $v/\varrho_*$ and integrating by parts we obtain the important identity
$$ \lambda \int_\Omega (\varrho^\prime_*/\varrho_*) |v|^2 \,dx + \int_\Omega k_*|\nabla v|^2 \,dx = (T_\lambda g|g)_\Sigma.$$
Hence $T_\lambda$ is positive semi-definite on $L_2(\Sigma)$. In a similar way one can show that $T_\lambda$ is selfadjoint, and it is compact in $L_2(\Sigma)$,
as $T_\lambda\in \cB( H^{1/2}_2(\Sigma);H^{3/2}_2(\Sigma)$. Therefore, $B_\lambda$ is selfadjoint with compact resolvent, hence its spectrum consists only of semi-simple real eigenvalues.

\medskip

\noindent
{\bf (b)}  We need to compute the limit of $\lambda T_\lambda$ as $\lambda\to 0$. For this purpose we introduce first the 
bulk operator $A_2$ in $L_2(\Omega)$ by means of
$$ A_2 v = - \frac{\varrho_* k_*}{\varrho_*^\prime} \Delta v, \quad v\in {\sf D}(A_2),$$
with domain
$${\sf D}(A_2)= \{ v\in H^2_2(\Omega\setminus \Sigma):\; \partial_\nu v=0 \mbox{ on } \partial\Omega,\; 
[\![v/\varrho_*]\!]=0,\, [\![\varrho_*k_* \partial_\nu v]\!]=0 \mbox{ on } \Sigma\}.$$
This operator is selfadjoint and positive semi-definite w.r.t.\, the inner product
$$\langle v_1|v_2\rangle := \int_\Omega v_1\overline{v_2} \varrho^\prime_* \,dx/\varrho_*,$$
and by compact embedding has compact resolvent.
We decompose the solution $v$ of \eqref{evpii} as $v = v_0+v_2$, where $v_0$ solves \eqref{evpii}, with $g=h$, for a fixed $\lambda_0>0$. Then $v_2$ solves the problem
$$\lambda v_2+A v_2= (\lambda_0-\lambda)v_0,\quad \mbox{ hence } v_2=(\lambda_0-\lambda)(\lambda+A_2)^{-1} v_0.$$
Let $P_0$ denote the orthogonal projection onto ${\sf N}(A_2)$. Then it is well-known that $\lambda(\lambda+A_2)^{-1} \to P_0$ as $\lambda\to0.$
Therefore, we obtain
$$\lambda v = \lambda v_0 +(\lambda_0-\lambda)\lambda(\lambda +A_2)^{-1}v_0 \to \lambda_0P_0v_0,$$
as $\lambda\to0$. It is easy to see that the kernel of $A_2$ is one-dimensional and spanned by the function $\varrho_*$, which is constant in the phases.
This implies that the projection $P_0$ is given by $P_0 = \varrho_*\otimes \varrho_*/(\varrho^\prime_*|\varrho_*)_\Omega.$ 
Hence
\begin{equation*}
\begin{aligned}
(\varrho^\prime_*|\varrho_*)_\Omega\lambda_0 P_0 v_0 &= \varrho_* \langle \lambda_0 v_0|\varrho_*\rangle = \varrho_*\int_\Omega \lambda_0\varrho^\prime_* v_0\,dx \\
&= \varrho_*\int_\Omega \varrho_*k_* \Delta v_0 \,dx= - \varrho_*\int_\Sigma [\![\varrho_*k_* \partial_\nu v_0]\!]\,d\Sigma = \varrho_*\int_\Sigma h \,d\Sigma,
\end{aligned}
\end{equation*}
i.e.,\ we have
$$P_0 \lambda_0 v_0= \big(\varrho_*/(\varrho^\prime_*|\varrho_*)_\Omega\big) \int_\Sigma h \,d\Sigma.$$
Taking the jump of $P_0 \lambda_0v_0$ across $\Sigma$ this finally yields
$$ B_0 h = \big([\![\varrho_*]\!]^2/(\varrho^\prime_*|\varrho_*)_\Omega) \int_\Sigma h \,d\Sigma +\sigma \cA_\Sigma h.$$
Decomposing $h= h_0 +\sum_k h_k\chi_{\Sigma_k}$ with constants $h_k$ such that $\int_{\Sigma_k} h_0=0$ for all $k$, and observing that $\cA_\Sigma$ is positive semi-definite on functions with mean zero over each component $\Sigma_k$ of $\Sigma$, we may assume $h_0=0$ in the sequel. If $\sum_k h_k =0$, then
$$ B_0 h = -\frac{\sigma(n-1)|\Sigma|}{m R_*^2} h =:-\mu_0 h,$$
which shows that $-\mu_0$ is an $(m-1)$-fold eigenvalue of $B_0$. Finally consider $h$ constant over $\Sigma$. Then
$$ B_0h =  [\big([\![\varrho_*]\!]^2|\Sigma|/(\varrho^\prime_*|\varrho_*)_\Omega)- \frac{\sigma(n-1)}{ R_*^2}]h =\mu_1 h.$$
This yields another eigenvalue $\mu_1$ of $B_0$ which is negative if the stability condition {\bf (SCii)} does not hold.

\medskip

\noindent
{\bf (c)} Next we show that for large $\lambda$ the operator $B_\lambda$ is positive definite in $L_2(\Sigma)$. Let ${\sf a}_k$ be an orthonormal basis of
${\sf N}(\cA_\Sigma)\oplus {\sf N}( (n-1)/R_*^2 +\cA_\Sigma)$ and let $P= \sum_k a_k\otimes a_k$ denote the corresponding orthogonal projection in $L_2(\Sigma)$. Then with $Q=I-P$, $\cA_\Sigma$ is positive definite on ${\sf R}(Q)={\sf N}(P)$. 
Now we assume the contrary, i.e., there exist sequences $\lambda_n\to\infty$,
$h_n\in L_2(\Sigma)$ with $|h_n|_\Sigma =1$, such that $(B_{\lambda_n}h_n|h_n)_\Sigma \leq 1/n$, for all $n\in\N.$ Then
$$[\![\varrho_*]\!]^2\lambda_n (T_{\lambda_n}h_n|h_n)_\Sigma \leq (B_{\lambda_n}h_n|h_n) -\sigma (\cA_\Sigma Ph_n|Ph_n)_\Sigma \leq C$$
is bounded, hence the corresponding solutions $v_n$ of \eqref{evpii} satisfy 
$$\lambda_n |v_n|_2+ \sqrt{\lambda_n} |\nabla v_n|\leq C.$$
Therefore, $\lambda_nv_n \rightharpoonup w$ weakly in $L_2(\Omega)$
along a subsequence, which will be denoted again by $\lambda_n v_n$. Taking a test function $\phi\in \cD(\Omega\setminus \Sigma)$, this yields
$$ (\varrho^\prime_* \lambda_n v_n|\phi)_\Omega = (\varrho_* k_* \Delta v_n|\phi)_\Omega 
=  (\varrho_* k_* v_n|\Delta\phi)_\Omega \to 0,$$
as $n\to \infty$, hence $w=0$. Next we extend the functions ${\sf a}_k$ from $\Sigma$ to functions 
$a_k\in {_0}H^1_2(\Omega)$. Then
\begin{equation*}
\begin{aligned}
 (h_n|{\sf a}_k)_\Sigma &= -\int_\Sigma [\![ \varrho_*k_* \partial_\nu v_n]\!]{\sf a}_k \,d\Sigma\\
  &= \int_\Omega {\rm div}(\varrho_*k_* \nabla v_n {\sf a}_k) \,dx\\
 &= \int_\Omega \varrho_*^\prime \lambda_n v_n {\sf a}_k\,dx + \int_\Omega \varrho_*k_*\nabla v_n\cdot\nabla{\sf a}_k \,dx\to 0,
 \end{aligned}
 \end{equation*}
 as $n\to \infty$, for each $k$, which shows $Ph_n \to 0$. But as $\cA_\Sigma$ is positive definite on ${\sf N}(P)$, 
 this also yields $Qh_n\to 0$ in $L_2(\Sigma)$, a contradiction to $|h_n|_\Sigma=1$.

\medskip

\noindent
{\bf (d)} We have shown that in case $\Sigma$ consists of $m$ components, $B_0$ has $(m-1)$ negative eigenvalues if the stability condition {\bf (SCii)} holds and $m$ negative eigenvalues otherwise, and $B_\lambda$ has no negative eigenvalues for large $\lambda$. As $\lambda$ runs from zero to infinity these negative eigenvalues have to cross the imaginary axis through $0$, this way inducing an equal number of  positive eigenvalues of $L_2$. This proves the statements in case {\bf (ii)}.

Next we deal with case {\bf (i)} without phase transition. The arguments are similar, and it is enough to carry out steps {\bf (a)} and {\bf (b)}. The remaining steps will be the same as in case {\bf (ii)}, so we may skip them.

\medskip

\noindent
{\bf (a)} Consider the elliptic problem
\begin{equation}\label{evpi}
\begin{aligned}
\varrho^\prime_*\lambda v  -\varrho_*k_*\Delta v &=0  &&\mbox{in}\;\; \Omega\setminus\Sigma,\\
\partial_\nu v &=0 &&\mbox{on}\;\; \partial\Omega, \\
\mbox{}[\![k_* \partial_\nu v]\!]&=0 && \mbox{on}\;\; \Sigma,\\
- k_*\partial_\nu v&=g && \mbox{on} \;\;\Sigma.\
\end{aligned}
\end{equation}
Given $g\in H^{1/2}_2(\Sigma)$, by elliptic theory this problem has a unique solution  $v$, for each $\lambda>0$. Here we  set
$ T_\lambda g: = [\![v]\!]$.
Then $\lambda>0$ is an eigenvalue of the linearization at equilibrium $e_*$ if and only if
$0$ is an eigenvalue of
$$ B_\lambda =  \lambda T_\lambda +\sigma \cA_\Sigma.$$
Next, multiplying \eqref{evpi} with $v/\varrho_*$ and integrating by parts we obtain the identity
$$ \lambda \int_\Omega (\varrho^\prime_*/\varrho_*) |v|^2 \,dx + \int_\Omega k_*|\nabla v|^2 \,dx = (T_\lambda g|g)_\Sigma.$$
Hence $T_\lambda$ is positive semi-definite on $L_2(\Sigma)$. In a similar way one can show that $T_\lambda$ is selfadjoint, and it is compact in $L_2(\Sigma)$,
as $T_\lambda\in \cB( H^{1/2}_2(\Sigma);H^{3/2}_2(\Sigma)$. Therefore, $B_\lambda$ is selfadjoint with compact resolvent, hence its spectrum consists only of semi-simple real eigenvalues.

\medskip

\noindent
{\bf (b)} We proceed in a similar way as in case {\bf(ii)}. Here the operator $A_1$ in $L_2(\Omega)$ is defined by
$$ A_1 v = - \frac{\varrho_* k_*}{\varrho_*^\prime} \Delta v, \quad v\in {\sf D}(A),$$
with domain
$${\sf D}(A_1)= \{ v\in H^2_2(\Omega\setminus \Sigma):\; \partial_\nu v=0 \mbox{ on } \partial\Omega,\;  
[\![k_*\partial_\nu v]\!]=0,\,k_* \partial_\nu v=0 \mbox{ on } \Sigma\}.$$
This operator is selfadjoint and positive semi-definite w.r.t.\, the inner product
$$\langle v_1|v_2\rangle := \int_\Omega v_1\overline{v_2} \varrho^\prime_* \,dx/\varrho_*,$$
and by compact embedding has compact resolvent.  To compute the projection $P_0$, note that the kernel of ${A_1}$ is spanned by the characteristic functions
$\chi_{ij}:=\chi_{\Omega_{ij}}$ of the components of the phases, as any $v\in {\sf N}(A_1)$ is constant on each component of $\Omega\setminus\Sigma$. This yields with $\varrho_{ij}= \varrho_*$ on $\Omega_{ij}$ and similarly for $\varrho^\prime_{ij}$,
$$ P_0 = \sum_{ij} \big(\varrho_{ij}/\varrho_{ij}^\prime|\Omega_{ij}|\big) \chi_{ij}\otimes \chi_{ij}.$$
Next we compute $P_0\lambda_0v_0$ as follows.
\begin{equation*}
\begin{aligned}
P_0\lambda_0 v_0 &=\sum_{ij}  \big(\varrho_{ij}/\varrho_{ij}^\prime|\Omega_{ij}|\big)\chi_{ij}\int_{\Omega_{ij}} \lambda_0\varrho^\prime_*v_0/\varrho_*\,dx\\
&= \sum_{ij}  \big(\varrho_{ij}/\varrho_{ij}^\prime|\Omega_{ij}|\big)\chi_{ij}\int_{\Omega_{ij}} k_*\Delta v_0\,dx\\
&= \sum_{ij}  \big(\varrho_{ij}/\varrho_{ij}^\prime|\Omega_{ij}|\big)\chi_{ij}(-1)^{i+1}\int_{\partial\Omega_{ij}} k_* \partial_\nu v_0 d(\partial\Omega_{ij})\\
&= \sum_{ij}  \big(\varrho_{ij}/\varrho_{ij}^\prime|\Omega_{ij}|\big)\chi_{ij}(-1)^{i}\int_{\partial\Omega_{ij}} h d(\partial\Omega_{ij}),
\end{aligned}
\end{equation*}
hence
$$  P_0\lambda_0 v_0 =\sum_{ij}  \big(\varrho_{ij}/\varrho_{ij}^\prime|\Omega_{ij}|\big)\chi_{ij}(-1)^{i}\sum_l \delta_{ij}^l \int_{\Sigma_l} h \,d\Sigma,$$
where $\delta^k_{ij}= 1$ if $\Sigma_k\subset \partial\Omega_{ij}$ and $\delta^k_{ij}= 0$ otherwise. To compute the jump we note that $[\![\chi_{ij}]\!]=(-1)^{i}$ if $\delta^k_{ij}=1$ and  is 0 otherwise. This yields
$$[\![P_0\lambda_0v_0]\!]=\sum_{kl} \sum_{ij} \big(\varrho_{ij}/\varrho_{ij}^\prime|\Omega_{ij}|\big)\chi_{\Sigma_l} \delta^k_{ij}\delta_{ij}^l\int_{\Sigma_k} h \,d\Sigma,$$
and so decomposing as before $h=h_0 +\sum_k h_k \chi_{\Sigma_k}$, we derive the representation
$$( B_0 h|h)_\Sigma  = \sigma (\cA_\Sigma h_0|h_0)_\Sigma  + \sum_{kl} c^*_{kl} |\Sigma_k|h_k |\Sigma_l| h_l,$$
where the coefficients $c_{kl}^*$ of the matrix $C_*$ have been introduced in Section 3. As a consequence we see that the number of negative eigenvalues of $L_1$
equals the number of negative eigenvalues of $C_*$.

\medskip

\bigskip

\noindent
{\bf 5.4 Nonlinear stability of equilibria}

\noindent
Let {$\cE$} be the set of (non-degenerate) equilibria, and fix some equilibrium {$e_*=(\pi_*,\Gamma_*)\in\cE$}.
Employing the findings from the previous section, we have

\begin{itemize}
\item {$e_*$} is {normally stable} if {$C_*$} is positive definite, resp.\ {$\zeta_*<1$} and {$\Gamma_*$} is connected.
\item {$e_*$} is {normally hyperbolic} if {$C_*$} is indefinite, resp.\ {$\zeta_*>1$} or {$\Gamma_*$} is disconnected.
\end{itemize}

\noindent
Therefore, the {Generalized Principle of Linearized Stability}  due to Pr\"uss, Simonett, Zacher \cite{PSZ09}  yields  our
main result on stability of equilibria.

%\bigskip

\goodbreak
\noindent
\begin{thm}
\label{thm:stability}
 Let {$e_*\in\cE$} be a non-degenerate equilibrium such that {${\rm det}\, C_*\neq 0$}, resp.\ {$\zeta_*\neq1$}. 
Then
\begin{itemize}
\item[{\bf (i)}] If {$e_*$}  is normally stable, it is nonlinearly stable, and any solution starting near {$e_*$}
is global and converges to another equilibrium {$e_\infty\in\cE$} at an exponential rate.
\item[{\bf (ii)}] 
If {$e_*$} is normally hyperbolic, then {$e_*$} is nonlinearly unstable. Any solution starting in a neighborhood of {$e_*$}
{and staying near} {$e_*$} exists globally and converges to an equilibrium {$e_\infty\in\cE$} at an exponential rate.
\end{itemize}
\end{thm}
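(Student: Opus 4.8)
The plan is to derive the theorem from the Generalized Principle of Linearized Stability (GPLS) of \cite{PSZ09}, in the form for quasilinear parabolic problems on manifolds developed in \cite{PrSi16}, Chapter~5; essentially all hypotheses have been prepared in Sections~4 and~5, so the proof amounts to assembling them. First I would fix the reference hypersurface $\Sigma=\Gamma_*$, apply the Hanzawa transformation as in Section~4.3, and rewrite the transformed Verigin problem --- in both cases {\bf (i)} and {\bf (ii)} --- as an abstract quasilinear evolution equation $\dot z+\cA(z)z=\cF(z)$, $z(0)=z_0$, with $z=(\pi,h)$, on the state manifold $\cSM$ of admissible configurations $(\pi,\Gamma)$. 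By Section~4.2 the principal linearization has maximal $L_p$-regularity --- its boundary symbol being equivalent to $s_0(\lambda,\xi)=\lambda+|\xi|^2(\lambda+|\xi|^2)^{1/2}$, that of the Stefan problem with surface tension --- and by Section~4.3, for $p>n+2$ the operator $L=\cA(z_*)$ is an isomorphism $\EE(a)\to\FF(a)$ and $\cF$ is of class $C^1$. Hence the problem generates a local semiflow on $\cSM$, in which $e_*$ corresponds to the equilibrium $z_*=(\pi_*,0)$, which is the setting required by GPLS.

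Next I would feed in the spectral information. From Section~5.2, the kernel ${\sf N}(L)$ of the full linearization at $e_*$ equals the tangent space $T_{e_*}\cE$, and under the non-degeneracy hypothesis ${\rm det}\,C_*\neq0$ (resp.\ $\zeta_*\neq1$) the eigenvalue $0$ is semi-simple; consequently $\cE$ is, near $e_*$, a $C^1$-submanifold of $\cSM$ of dimension ${\rm dim}\,{\sf N}(L)$, and the topological direct sum decomposition $X_0={\sf N}(L)\oplus{\sf R}(L)$ required by GPLS holds. From Sections~5.1 and~5.3, the remaining spectrum of $L$ consists of real eigenvalues of finite multiplicity accumulating only at $+\infty$, with $\sigma(L)\cap i\R=\{0\}$, and the dimension of the exponentially unstable subspace of the linearized problem equals the number of negative eigenvalues of $C_*$ in case {\bf (i)}, and equals $m$ if $\zeta_*>1$ and $m-1$ if $\zeta_*<1$ in case {\bf (ii)}. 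Hence, as recorded in Section~5.4, $e_*$ is normally stable exactly when $C_*$ is positive definite, resp.\ when $\Gamma_*$ is connected and $\zeta_*<1$, and normally hyperbolic exactly when $C_*$ is indefinite, resp.\ when $\Gamma_*$ is disconnected or $\zeta_*>1$.

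Finally I would invoke GPLS. In the normally stable case it yields a neighborhood $U$ of $e_*$ in $\cSM$ such that $e_*$ is Lyapunov stable and every solution with initial value in $U$ exists globally, stays in $U$, and converges at an exponential rate in $\cSM$ to some $e_\infty\in\cE$; transporting this back through the inverse Hanzawa map gives assertion {\bf (i)}. In the normally hyperbolic case, GPLS provides local center, stable and unstable manifolds near $e_*$, the center manifold coinciding with an open piece of $\cE$ and the unstable manifold having positive dimension, whence $e_*$ is unstable; moreover any solution which starts near $e_*$ and never leaves a fixed small neighborhood of $e_*$ must lie on the local center-stable manifold and therefore converges, again at an exponential rate, to an equilibrium $e_\infty\in\cE$. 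This is assertion {\bf (ii)}.

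The substantive difficulties are all in the preparatory material rather than in this concluding deduction: recasting a geometric free-boundary problem as a quasilinear equation on a genuine \emph{manifold} (so that the manifold version of GPLS, not just the Banach-space version, is needed), pinning down the correct state manifold $\cSM$ and checking that the abstract linearization $L$ appearing in GPLS really is the operator whose spectrum was analyzed via $L_1$, $L_2$ and the reduced operators $B_\lambda$, and verifying that maximal $L_p$-regularity, $\cF\in C^1$, semi-simplicity of $0$, and the spectral gap hold simultaneously for one and the same $p>n+2$. Once these are granted --- and Sections~4 and~5 grant them --- the theorem follows by direct appeal to \cite{PSZ09} and \cite{PrSi16}, Chapter~5.
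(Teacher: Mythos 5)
Your proposal is correct and follows essentially the same route as the paper: the paper's proof consists precisely of verifying normal stability/normal hyperbolicity (Sections 5.1--5.4) and then invoking the Generalized Principle of Linearized Stability of \cite{PSZ09}, with the details paralleling the treatment of the Stefan problem with surface tension in \cite{PrSi16}, Chapter 11. Your assembly of maximal regularity, kernel identification, semi-simplicity of $0$, and the eigenvalue counts before appealing to GPLS matches the intended argument.
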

\begin{proof}
The proof parallels that for the Stefan problem with surface tension given in Pr\"uss and Simonett \cite{PrSi16}, Chapter 11.
\end{proof}

\section{Global Behaviour}

In this last section we want to describe the global behaviour of the Verigin problem with and without phase transition.

\bigskip

\noindent
{\bf 6.1 The Local Semiflows}

\noindent
Here we introduce the semiflows induced by the solutions of the problems.
Recall that the closed {$C^2$}-hypersurfaces contained in {$\Omega$}
form a {$C^2$}-manifold, denoted by {$\cMH^2$}.
Charts are obtained via parametrization over a fixed hypersurface, and the
tangent spaces consist of the normal vector fields.\\
As an ambient space for the
state-manifold $\cSM$ of the Verigin problems
we consider the product space {$X_0:= L_p(\Omega)\times \cMH^2$}.
The {compatibility conditions} are given by
\begin{equation}\label{Cvel}
\begin{aligned}
\partial_\nu\pi &=0 && \mbox{on}\;\; \partial\Omega,\\
\mbox{}[\![\pi]\!]&=\sigma H_\Gamma &&\mbox{on}\;\;\Gamma,\\
\quad [\![ k(\pi,|\nabla\pi|^2)\partial_\nu \pi]\!]&=0 &&\mbox{on}\;\;\Gamma,\\
k(\pi,|\nabla\pi|^2)\partial_\nu\pi\in W^{2-6/p}_{p}&(\Gamma) && \mbox{on}\;\;\Gamma,
\end{aligned}
\end{equation}
in the {first case}, while in the {second case} the last two conditions are to be replaced by
\begin{equation}
\label{Cvel-B}
[\![\psi(\varrho)+\varrho\psi^\prime(\varrho)]\!]=0,\quad [\![\varrho(\pi) k(\pi,|\nabla\pi|^2)\partial_\nu\pi]\!]\in W^{2-6/p}_{p}(\Gamma),
\end{equation}
and in this case we additionally require $[\![\varrho]\!]\neq0$.

We define the state manifolds $\cSM$ of the problems as follows
\begin{equation}\label{phasemanif}
\begin{aligned}
\cSM&:=\{(\pi,\Gamma)\in X_0: \pi\in W^{2-2/p}_p(\Omega\setminus\Gamma),
\, \Gamma\in W^{4-3/p}_p,\nonumber\\ 
&\hspace{0.7cm} \mbox{ the compatibility conditions are satisfied} \}.
 \end{aligned}
 \end{equation}
The charts for these manifolds are obtained by the charts induced by those for {$\cMH^2$},
followed by a Hanzawa transformation.
Observe that the compatibility conditions
as well as regularity are preserved by the solutions.

Applying the local existence result and re-parameterizing repeatedly, we obtain
 the {local semiflows} on {$\cSM$}.

\begin{thm}  Let {$p>n+2$} and {$[\![\varrho_0]\!]\neq0$} for the second case.\\
 Then the two-phase Verigin
problems  generate  local semiflows
on their respective state manifolds {$\cSM$}. Each solution {$(\pi, \Gamma)$}
exists on a maximal time interval {$[0,t_+)$}.
\end{thm}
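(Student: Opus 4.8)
The plan is to reduce the assertion to the abstract local well-posedness result for quasilinear parabolic problems from Pr\"uss and Simonett \cite{PrSi16}, Chapter 9, applied to the Hanzawa-transformed Verigin problems, and then to build the semiflow and the state manifold by the usual reparametrization argument. Concretely, I would first fix a reference hypersurface $\Sigma$ and, for $\Gamma$ in a $W^{4-3/p}_p$-neighborhood of $\Sigma$, introduce the Hanzawa transformation $\Theta_h$ induced by a normal height function $h$, pulling the moving-domain problem back to the fixed domain $\Omega\setminus\Sigma$; this is exactly the transformation already described in Section~4.3, where the system is written as $Lz=N(z)$ with $z=(\pi,h)$ in $\EE(a)$. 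The transformed problem is quasilinear and has the analytic structure of the Stefan problem with surface tension: by Section~4.2 the principal boundary symbols of both Verigin problems (with and without phase transition) are equivalent to $s_0(\lambda,\xi)=\lambda+|\xi|^2(\lambda+|\xi|^2)^{1/2}$, so the Lopatinskii--Shapiro condition holds, and one obtains maximal $L_p$-regularity for the linearization $L$ on $J=[0,a]$. Hence $L\colon\EE(a)\to\FF(a)$ is an isomorphism onto the data space $\FF(a)$.

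\textbf{Key steps.} (1) Verify the smoothness of the nonlinearity: using $p>n+2$ and the embeddings $\EE_\pi(a)\hookrightarrow C(J;BUC^{1+\alpha}(\Omega\setminus\Sigma))$, $\EE_h(a)\hookrightarrow C^1(J;C^{1+\alpha}(\Sigma))\cap C(J;C^{3+\alpha-1/p}(\Sigma))$ with $\alpha=1-(n+2)/p>0$, show $N\colon\EE(a)\to\FF(a)$ is of class $C^1$ (the quotients $\rho'(\pi)$, $\rho(\pi)k(\pi,|\nabla\pi|^2)$, curvature terms, and the transport/Gibbs--Thomson terms are compositions of $C^2$-functions with arguments controlled in the above Hölder norms, and remain bounded since $\rho,\rho',k,k+2s\partial_2k>0$; in the second case $[\![\rho_0]\!]\neq0$ guarantees solvability of the flux relation for $V_\Gamma$). (2) Handle the data: the initial values $(\pi_0,h_0)$, with $h_0=0$ after choosing $\Sigma=\Gamma_0$, must satisfy the compatibility conditions \eqref{Cvel} resp.\ \eqref{Cvel-B}, which are precisely the trace/Lopatinskii compatibility conditions for $\FF(a)$; this is why the state manifold $\cSM$ is defined with exactly those conditions built in. (3) Contraction argument: split $N=N_0+N_1$ into lower-order terms, made small by smallness of $a$, and highest-order terms carrying $\nabla_\Sigma h$, made small by smallness of $h_0$ in $W^{4-3/p}_p$ (equivalently, by restricting to a small neighborhood of the reference surface); then $z\mapsto L^{-1}N(z)$ is a contraction on a small ball of $\EE(a)$, giving a unique solution, and uniqueness is independent of the chart since any two Hanzawa parametrizations are $C^1$-diffeomorphically related. (4) Semiflow and maximal interval: by the standard parameter-dependent version, the solution depends continuously on the data; reparametrizing over the current hypersurface $\Gamma(t)$ at each time and concatenating yields a solution on a maximal half-open interval $[0,t_+)$, and the map $(\pi_0,\Gamma_0)\mapsto(\pi(t),\Gamma(t))$ is a local semiflow on $\cSM$ because the compatibility conditions and the $W^{2-2/p}_p\times W^{4-3/p}_p$-regularity are preserved (they are built into $\FF(a)$ and propagated by maximal regularity).

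\textbf{Main obstacle.} The genuine work is not the abstract iteration but the bookkeeping that the Hanzawa-transformed nonlinearity $N$ indeed maps $\EE(a)$ into the correct data space $\FF(a)$ with the claimed splitting into "$a$-small" and "$h_0$-small" parts --- in particular that the top-order contributions to $N$ (arising from the transformed Laplacian, the transformed normal derivative $\partial_\nu$, and the mean-curvature operator $H_\Gamma$) all carry at least one factor of $\nabla_\Sigma h$ or a difference $h-0$, so that they vanish at the reference configuration and have small Lipschitz constant for $h_0$ small. This is where the precise description of $\FF(a)$ (which the excerpt deliberately skips) enters, and where one must invoke the detailed computations of Pr\"uss and Simonett \cite{PrSi16}, Chapter~9; for the Verigin problems the verification is structurally identical to the Stefan problem with surface tension, since, as noted in Section~4.2, the two have equivalent principal symbols and the lower-order perturbations (the density- and permeability-dependent coefficients) are controlled by the assumed $C^2$-regularity and positivity. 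Once this is in place, assembling the local semiflow on $\cSM$ and extracting the maximal interval $[0,t_+)$ is routine.
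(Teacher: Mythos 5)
Your proposal follows essentially the same route as the paper: Hanzawa transformation to the fixed configuration, maximal $L_p$-regularity for the principal linearization (justified by the equivalence of the boundary symbols with that of the Stefan problem with surface tension), $C^1$-smoothness of $N$ for $p>n+2$ via the stated embeddings, the splitting of $N$ into terms small for small $a$ and top-order terms carrying $\nabla_\Sigma h$ small for small $h_0$, the contraction mapping principle, and then repeated re-parametrization over the current interface to assemble the local semiflow on $\cSM$ with a maximal existence interval $[0,t_+)$, exactly as sketched in Sections 4.3 and 6.1 with the details delegated to Pr\"uss and Simonett \cite{PrSi16}, Chapter 9. The only cosmetic difference is your normalization $h_0=0$ by taking $\Sigma=\Gamma_0$; the paper instead keeps $h_0$ small by choosing a nearby reference surface, which is what the smallness argument for the top-order terms actually uses, but this does not affect the correctness of your argument.
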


\goodbreak
\bigskip

\noindent
{\bf 6.2. Global existence and asymptotic behaviour}

\noindent
There are a number of obstructions to global existence of the solutions:
\begin{itemize}
\item[-] {\bf regularity}: the norms of either  {$\pi(t)$} or {$\Gamma(t)$} may become unbounded;
\item[-] {\bf geometry}: the topology of the interface may change; or the interface may touch the boundary of {$\Omega$}; 
or a part of the interface may shrink to a point in case {\bf (ii)};
\item[-] {\bf well-posedness}: {$|[\![\varrho(t,x)]\!]|$} may come close to zero in case {\bf (ii)}.
\end{itemize}
\medskip

\noindent
We say that a solution {$(\pi,\Gamma)$} satisfies the {uniform ball condition},
if there is a radius {$r>0$} such that for each {$t\in[0,t_+)$} and
at every point {$p\in\Gamma(t)$} we have
$$\bar{B}(p\pm r\nu_{\Gamma(t)}(p),r)\subset\Omega,\quad\bar{B}(p\pm r\nu_{\Gamma(t)}(p),r)\cap \Gamma(t)=\{p\}.$$
%there are {$x_j\in\Omega$} such that
%{$B_r(x_j)\subset\Omega_j(t)$} and {$x_0\in\partial B_r(x_j)$},{$j=1,2$}.
Combining the above results, we obtain the following theorem
on the asymptotic behavior of solutions.

\begin{thm} Let {$p>n+2$}. Suppose that {$(\pi,\Gamma)$}
is a solution of one of the Verigin problems, and assume the following on its maximal interval of existence {$[0,t_+)$}:
\begin{itemize}
\item[{($\alpha$)}]   {$|\pi(t)|_{W^{2-2/p}_p}+|\Gamma(t)|_{W^{4-3/p}_p}\leq M$};
\smallskip
\item[{($\beta$)}]  {$(\pi,\Gamma)$} satisfies the uniform ball condition;
\smallskip 
\item[{($\gamma$)}]  {$c\leq [\![\varrho(t)]\!]$} in case  {\bf (ii)}, for some constant {$c>0$}.
\end{itemize}
Then {$t_+=\infty$}, i.e.,\ the solution exists globally, its limit set {$\omega(\pi,\Gamma)\subset \cE$} is nonempty, and the solution converges in {$\cSM$} to an equilibrium, provided either
\begin{itemize}
\item $\omega(\pi,\Gamma)$ contains a stable equilibrium {$e_\infty$};
\item $(\pi,\Gamma)$ stays eventually near some  {$e_\infty\in\omega(\pi,\Gamma)$}.
\end{itemize}
The converse is also true: if a global solution converges, then ($\alpha$),($\beta$),($\gamma$) are valid.
\end{thm}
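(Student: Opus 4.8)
The plan is to follow the standard scheme used in Pr\"uss and Simonett \cite{PrSi16} for quasilinear parabolic problems with surface tension (the Stefan problem being the model case), exploiting that all the required structural ingredients—local well-posedness with maximal $L_p$-regularity, the strict Lyapunov functional ${\sf E}_a$, the identification of $\cE$, and the linearized stability/hyperbolicity analysis—have already been established in the preceding sections. The argument splits into two implications: (a) under hypotheses $(\alpha)$, $(\beta)$, $(\gamma)$ the solution exists globally and converges to an equilibrium in $\cSM$; (b) conversely, any globally existing convergent solution satisfies $(\alpha)$, $(\beta)$, $(\gamma)$.

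For implication (a), the first step is a \emph{continuation argument}: I would show that if $t_+<\infty$ then at least one of the obstructions listed in Section 6.2 must occur. Concretely, $(\alpha)$ bounds the $W^{2-2/p}_p$-norm of $\pi(t)$ and the $W^{4-3/p}_p$-norm of $\Gamma(t)$; $(\beta)$ (the uniform ball condition) prevents the interface from touching $\partial\Omega$, from self-intersecting, and—together with $(\alpha)$—keeps $\Gamma(t)$ in a fixed $W^{4-3/p}_p$-ball of hypersurfaces that can be parametrized over a single reference manifold via a uniformly controlled Hanzawa transformation; $(\gamma)$ rules out the well-posedness degeneracy $[\![\rho]\!]\to 0$ in case \textbf{(ii)}. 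On the transformed problem $Lz=N(z)$ one then has uniform bounds on $z(t)=(\pi(t),h(t))$ in the trace space, and maximal regularity plus the $C^1$-property of $N$ allow one to solve on a time interval of length bounded below, uniformly in the starting time; this contradicts maximality, so $t_+=\infty$. The second step is \emph{precompactness of the orbit}: the uniform bounds together with compact embeddings (valid since $\Omega$ is bounded) show that $\{(\pi(t),\Gamma(t)):t\geq t_0\}$ is relatively compact in $\cSM$, hence $\omega_+(\pi,\Gamma)$ is nonempty, compact, connected, and invariant. The third step uses the \emph{strict Lyapunov functional}: ${\sf E}_a$ is nonincreasing along the flow and, by Section 3.1, constant only at equilibria, so $\omega_+(\pi,\Gamma)\subset\cE$ and ${\sf E}_a$ is constant on $\omega_+$. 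Finally, convergence to a \emph{single} equilibrium follows from the Generalized Principle of Linearized Stability \cite{PSZ09} together with Theorem~\ref{thm-stability} (the normal stability/hyperbolicity dichotomy proved in Section 5): if $\omega_+$ contains a stable equilibrium $e_\infty$, the flow enters its basin of attraction and converges to it; if instead the solution eventually stays near some $e_\infty\in\omega_+$, then $e_\infty$ lies on the finite-dimensional manifold $\cE$ and the local stable-manifold/center-manifold analysis near $e_\infty$ forces convergence to a point of $\cE$ (this is exactly where normal hyperbolicity is used, via the attractivity of the center manifold which here coincides with $\cE$).

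For the converse implication (b), suppose a global solution converges in $\cSM$ to some $e_\infty=(\pi_\infty,\Gamma_\infty)\in\cE$. Then $(\pi(t),\Gamma(t))\to(\pi_\infty,\Gamma_\infty)$ in $W^{2-2/p}_p(\Omega\setminus\Gamma(t))\times W^{4-3/p}_p$, which immediately gives the uniform norm bound $(\alpha)$. Since $\Gamma_\infty$ is a finite disjoint union of spheres strictly inside $\Omega$ with no boundary contact and positive (common or arbitrary) radii, it satisfies a uniform ball condition with some radius $r_\infty>0$; by $C^1$-closeness the nearby $\Gamma(t)$ satisfy it with, say, radius $r_\infty/2$, giving $(\beta)$. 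In case \textbf{(ii)}, $[\![\rho(\pi_\infty)]\!]\neq 0$ by the well-posedness requirement imposed on equilibria, and continuity of $\pi\mapsto[\![\rho(\pi)]\!]$ together with convergence yields $|[\![\rho(t)]\!]|\geq c>0$ for all $t$, which is $(\gamma)$.

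The main obstacle is the convergence-to-a-single-point step in part (a): boundedness plus a strict Lyapunov functional alone only gives $\omega_+\subset\cE$, not convergence, because $\cE$ is a genuine manifold of positive dimension (dimension $mn$ after the mass constraints, or $mn+1-1$ in case \textbf{(ii)}), so there is a nontrivial kernel of the linearization. Resolving this is precisely the content of the generalized principle of linearized stability in the normally stable/normally hyperbolic setting \cite{PSZ09}; invoking it requires verifying that $0$ is a semi-simple eigenvalue of the full linearization with kernel equal to $T_{e_*}\cE$ and that the spectrum is otherwise off the imaginary axis, all of which was carried out in Section 5 under the nondegeneracy hypotheses ${\rm det}\,C_*\neq0$ (resp.\ $\zeta_*\neq1$). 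One subtlety worth a remark is that the limit equilibrium need not be nondegenerate, so in the pure "stays near $e_\infty$" alternative one applies the center-manifold reduction \emph{at} $e_\infty$ and uses that the reduced flow on $\cE$ is trivial (every point of $\cE$ is an equilibrium), forcing the orbit to terminate at a point; the same reasoning underlies convergence from the "contains a stable equilibrium" alternative. All remaining estimates—uniform Hanzawa parametrization, the lower time-step bound, compactness—are routine once $(\alpha)$–$(\gamma)$ are in force, and the details are as in \cite{PrSi16}, Chapter~11.
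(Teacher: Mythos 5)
Your proposal is correct and follows essentially the same route as the paper, which simply invokes the argument for the Stefan problem with surface tension in Pr\"uss and Simonett \cite{PrSi16}, Chapter 11: a continuation argument based on maximal regularity and the uniform bounds $(\alpha)$--$(\gamma)$, relative compactness of the orbit, the strict Lyapunov functional ${\sf E}_a$ forcing $\omega_+\subset\cE$, and convergence to a single equilibrium via the generalized principle of linearized stability of \cite{PSZ09} in the normally stable/normally hyperbolic setting, together with the elementary converse. No gaps; your elaboration matches the intended proof.
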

\begin{proof}
It can be shown that the closed  $C^2$-hypersurfaces contained in $\Omega$ which bound a region 
$\Omega_1\subset\joinrel\subset\Omega$ 
form a $C^2$-manifold, denoted by $\cMH^2(\Omega)$, see for instance~\cite[Chapter 2]{PrSi16}.
It is also known that each $\Gamma\in\cMH^2(\Omega)$ admits a tubular neighborhood
$$U_a:=\{x\in\R^n: {\rm dist}(x,\Gamma)<a\}$$ 
of width $a=a(\Gamma)>0$ such that the signed distance function
$$d_\Gamma:U_a\to \R,\quad |d_\Gamma(x)|:={\rm dist}(x,\Gamma),$$ 
is well-defined and $d_\Gamma\in C^2(U_a,\R)$, see for instance~\cite[Section 2.3]{PrSi16}. 
Here, by convention,  $d_\Gamma(x)<0$ 
iff $x\in\Omega_1\cap U_a$.
We can then define a {level function} $\varphi_\Gamma$ by means of
\begin{equation*}
\label{level}
\varphi_\Gamma(x):=
\left\{
\begin{aligned}
& d_\Gamma(x)\chi(3d_\Gamma(x)/a)+ {\rm sgn}\,(d_\Gamma(x))(1-\chi(3d_\Gamma(x)/a)),  &&x\in U_a,\\
& \chi_{\Omega_{\rm ex}}(x)-\chi_{\Omega_{\rm in}}(x),  &&x\notin U_a,
\end{aligned}
\right.
\end{equation*}
where $\Omega_{\rm ex}$ and $\Omega_{\rm in}$ denote the exterior  and interior component of 
$\R^n\setminus U_a$,
respectively,
and $\chi$ is a smooth cut-off function with $\chi(s)=1$ if $|s|<1$ and $\chi(s)=0$ 
if $|s|>2$.
The level function $\varphi_\Gamma$ is then of class $C^2$,
$\varphi_\Gamma(x)=d_\Gamma(x)$ for $x\in U_{a/3}$, and
$\varphi_\Gamma(x)=0$ iff $x\in \Gamma$.

Let $\cMH^2(\Omega,r)$ denote the subset of $\cMH^2(\Omega)$ consisting of all $\Gamma\in \cMH^2(\Omega)$ such that $\Gamma\subset \Omega$ satisfies the ball condition with fixed radius $r>0$. This implies in particular that ${\rm dist}(\Gamma,\partial\Omega)\geq 2r$  and all principal curvatures of $\Gamma\in\cMH^2(\Omega,r)$ are bounded by $1/r$. Furthermore, 
the level functions $\varphi_\Gamma $ are well-defined 
for $\Gamma\in\cMH^2(\Omega,r)$ and form a bounded subset of $C^2(\bar{\Omega})$ and the map 
$$\Phi:\cMH^2(\Omega,r)\to C^2(\bar{\Omega}),\quad \Phi(\Gamma)=\varphi_\Gamma,$$ 
is a homeomorphism of the metric space $\cMH^2(\Omega,r)$ onto $\Phi(\cMH^2(\Omega,r))\subset C^2(\bar{\Omega})$, see \cite[Section 2.4.2]{PrSi16}.

Let $s-(n-1)/p>2$. For $\Gamma\in\cMH^2(\Omega,r)$
we define $\Gamma\in W^s_p(\Omega,r)$ if $\varphi_\Gamma\in W^s_p(\Omega)$. In this case the local charts for $\Gamma$ can be chosen of class $W^s_p$ as well. A subset $A\subset W^s_p(\Omega,r)$ is said to be (relatively) compact, if $\Phi(A)\subset W^s_p(\Omega)$ is (relatively) compact. Finally, 
we define
${\rm dist}_{W^s_p}(\Gamma_1,\Gamma_2):= |\varphi_{\Gamma_1}-\varphi_{\Gamma_2}|_{W^s_p(\Omega)}$
for $\Gamma_1,\Gamma_2\in \cMH^2(\Omega,r)$.

Suppose that the assumptions $(\alpha)-(\gamma)$ are valid. 
Then $\Gamma([0,t_+))\subset W^{4-3/p}_p(\Omega,r)$ is bounded, hence relatively compact in
$W^{4-3/p-\ve}_p(\Omega,r)$. Thus $\Gamma([0,t_+))$ can be covered by finitely many balls with centers $\Sigma_k$  such that
%${\rm dist}_{W^{4-3/p-\ve}_p}(\Gamma(t),\Sigma_j)\leq \delta$ for some $j=j(t)$, $t\in[0,t_+)$. 
$${\rm dist}_{W^{4-3/p-\ve}_p}(\Gamma(t),\Sigma_j)\leq \delta \quad\text{for some $j=j(t)$,\; $t\in[0,t_+)$}.$$ 
Let $J_k=\{t\in[0,t_*):\, j(t)=k\}$. Using for each $k$ a Hanzawa-transformation $\Xi_k$, we see that the pull backs 
$\{\pi(t,\cdot)\circ\Xi_k:\, t\in J_k\}$ are bounded in $W^{2-2/p}_p(\Omega\setminus \Sigma_k)$, 
hence relatively compact in $W^{2-2/p-\ve}_p(\Omega\setminus\Sigma_k)$.
 By well-posedness, we obtain solutions
 $(\pi^1,\Gamma^1)$ with initial configurations $(\pi(t),\Gamma(t))$ in the state manifold $\cSM$ on a common time interval, say $(0,a]$, and by uniqueness we have
 $$(\pi^1(a),\Gamma^1(a))=(\pi(t+a),\Gamma(t+a)).$$ 
 Continuous dependence implies then relative compactness of 
 $$\{(\pi(\cdot),\Gamma(\cdot)):\, 0\leq t<t_+\}$$ in $\cSM;$
in particular $t_+=\infty$ and the orbit $(\pi,\Gamma)(\R_+)\subset\cSM$ is relatively compact.
The available energy is a strict Lyapunov functional, hence the limit set $\omega(\pi,\Gamma)$
of a solution is contained in the set $\cE$ of equilibria.
By compactness, $\omega(\pi,\Gamma)\subset \cSM$ is non-empty, 
hence the solution comes close to $\cE$. Finally, we may apply the convergence result 
Theorem \ref{thm:stability} to complete the sufficiency part of the proof. 
Necessity follows by a compactness argument.
\end{proof}

\vspace{1cm}

\end{document}